\documentclass[11pt,a4paper,notitlepage,oneside]{amsart}
\usepackage{amsmath,amsthm,braket}
\usepackage[new]{old-arrows}
\usepackage[psamsfonts]{amssymb}
\usepackage[T1]{fontenc}
\usepackage[italian, english]{babel}
\usepackage[utf8]{inputenc}
\usepackage{enumitem}
\usepackage{fancyhdr}
\usepackage{tikz-cd}
\usepackage{amscd}
\usepackage[a4paper, bottom=3cm, left=3cm,right=3cm]{geometry}
\usepackage{hyperref}
\pagestyle{plain}

\theoremstyle{definition}
\newtheorem{defi}{Definition}[section]
\newtheorem{ex}[defi]{Example}
\theoremstyle{plain}
\newtheorem{teor}[defi]{Theorem}
\theoremstyle{plain}
\newtheorem{prop}[defi]{Proposition}
\theoremstyle{plain}
\newtheorem{lemma}[defi]{Lemma}
\theoremstyle{plain}
\newtheorem{cor}[defi]{Corollary}

\theoremstyle{definition}
\newtheorem{rem}[defi]{Remark}

\theoremstyle{definition}

\theoremstyle{plain}

\newcommand{\Aut}{\text{Aut}}
\DeclareMathOperator{\id}{\rm id}
\DeclareMathOperator{\deter}{\rm det}
\DeclareMathOperator{\dis}{\rm dis}
\newcommand{\N}{\mathbb{N}}
\newcommand{\Z}{\mathbb{Z}}
\newcommand{\R}{\mathbb{R}}
\newcommand{\Q}{\mathbb{Q}}
\newcommand{\C}{\mathbb{C}}

\newcommand{\lk}{\Lambda_{K3}}

\DeclareMathOperator{\NS}{\rm NS}

\DeclareMathOperator{\T}{\rm T}
\DeclareMathOperator{\FM}{\rm FM}

\DeclareMathOperator{\G}{\mathcal{G}}
\newcommand{\Pell}{\mathcal{P}}

\title{Strongly ambiguous Hilbert squares of projective K3 surfaces with Picard number one}
\author{Riccardo Zuffetti}
\email{riccardo.zuffetti@gmail.com}
\date{}
\begin{document}
\maketitle
\begin{abstract}
We provide a criterion for when Hilbert squares of complex projective K3 surfaces with Picard number one are strongly ambiguous. This criterion is the same as \cite[Proposition 3.14]{dm}, but is obtained by a different method. In particular, this enables us to compute the automorphism groups of these Hilbert squares from a different point of view with respect to \cite{bcns}.
\end{abstract}
\section*{Introduction}
Let $X$ be a nonsingular projective surface over $\C$. We recall that $X^{[n]}$, the \textit{Hilbert scheme of $n$ points on $X$}, is the space parametrizing 0-dimensional subschemes $Z\subset X$ of length $n$ (see for further details \cite[Deuxième Partie]{be1}, \cite{na} and \cite[Section 21]{ghj}). If $S$ is a projective K3 surface, the Hilbert square $S^{[2]}$ is an irreducible holomorphic symplectic fourfold. We say that $S^{[2]}$ is \textit{strongly ambiguous} if there exists another K3 surface $S'$ such that $S\not\cong S'$ but $S^{[2]}\cong S'^{[2]}$. We want to study the strongly ambiguous Hilbert squares of projective K3 surfaces with Picard number one. The main result is \cite[Proposition 3.14]{dm}, proved there through derived categories. In these pages we prove the same result (see Theorem \ref{teor:analogodm}) only with the glueing of lattices, the Hodge decomposition, the Torelli theorems and some properties of the ample and movable cones of $S^{[2]}$. The method here exposed is also applied to compute $\Aut(S^{[2]})$ (see Theorem \ref{teor:bcns}). This is the main result in \cite{bcns}, but proved from a different point of view. In \cite{c} one can find how to compute $\Aut(S^{[n]})$ with $n\ge2$, while in \cite{okawa} and \cite{mmy} one can find a study on birational maps between Hilbert squares.

In Sections \ref{sec:lattices}, \ref{sec:glueing} we recall some general results on lattices, while in Sections \ref{sec:K3surf}, \ref{sec:cones} we recall some properties on Hilbert squares of projective K3 surfaces and their nef and movable cones.
Let $S$ be a projective K3 surface with Picard number one, let $h$ be an ample class on $S$ with $h^2=2e$ with $e\in\N$. In Section~\ref{sec:FMK3} we recall the definition of FM partners of $S$ (see Definition \ref{defi:FMpartners}), and the number of (isomorphism classes of) FM partners of $S$ via the glueing of lattices. In Sections~\ref{sec:speranza},~\ref{sec:strongambiguity} we study a relation between the FM partners of $S$ and their Hilbert squares, in particular we want to understand for which $e$ there exist non isomorphic K3 surfaces such that their Hilbert squares are isomorphic. In Section \ref{sec:Automorphisms} we apply the previous methods to determine $\Aut(S^{[2]})$.

\vspace{0.5cm}
\textbf{Acknowledgments.} We would like to thank Chiara Camere and Bert van Geemen for their ideas and clarifications, and Alberto Cattaneo and Ciaran Meachan for their suggestions on the final version of this work.
\section{Lattices}\label{sec:lattices}
A $lattice$ is a pair $(\Gamma,Q)$ consisting of a free $\mathbb{Z}$-module of finite rank $\Gamma$ together with a symmetric bilinear form $Q:\Gamma\times \Gamma\rightarrow\mathbb{Z}$. Every lattice defines a \textit{quadratic form} $q(v)=v^2:=Q(v,v)$.

Given a $\mathbb{Z}$-basis $e_1,\dots,e_r$ of $\Gamma$, the \textit{Gram matrix} of $\Gamma$ (with this basis) is the $n\times n$ symmetric matrix which represents $Q$ on this basis.
The \textit{discriminant} of a lattice $(\Gamma,Q)$ is $\dis(\Gamma,Q):=\deter G$, where $G$ is a Gram matrix of that lattice.

A lattice $(\Gamma,Q)$ is said to be \textit{non degenerate} if $\dis(\Gamma,Q)\ne0$, \textit{even} if $Q(v,v)\in 2\mathbb{Z}$ for all $v\in S$, \textit{unimodular} if $\dis(\Gamma,Q)=\pm1$.

We say that a map $f:\Gamma_1\rightarrow \Gamma_2$ is an \textit{isometry} between lattices if it is an isomorphism of $\Z$-modules and preserves the bilinear forms.

A \textit{sublattice} of a lattice $(\Gamma,Q)$ is a free submodule $\Gamma'$ of $\Gamma$ with the induced bilinear form $(\Gamma',Q|_{\Gamma'\times \Gamma'})$. If $\Gamma/\Gamma'$ is a finite group then
\begin{equation}\label{eq:spultim}
\dis(\Gamma')=[\Gamma:\Gamma']^2\cdot\dis(\Gamma).
\end{equation}

A sublattice is said to be \textit{primitive} if the quotient module $\Gamma/\Gamma'$ is a free module.
\begin{ex} Let $V$ be a subset of $\Gamma$ and consider
\[
V^\perp :=\{v'\in \Gamma : Q(v',v)=0 \quad\forall v\in V\},
\]
then $V^\perp$ is a primitive sublattice of $\Gamma$.
\end{ex}
The \textit{dual lattice} of a non degenerate lattice $(\Gamma,Q)$ is the $\Z$-module
\[
\Gamma^*:=\set{v\in \Gamma_\Q:Q(v,w)\in\Z\,\,\,\,\forall w\in\Gamma}\subset\Gamma_\Q:=\Gamma\otimes\Q,
\]
together with the extension of $Q$ on $\Gamma^*$, that is denoted again by $Q$.
It is well known that $[\Gamma^*:\Gamma]=|\det(G)|=|\dis(\Gamma,Q)|$, in particular if $\Gamma$ is unimodular then $\Gamma\cong\Gamma^*$ as lattices.

Let $(\Gamma,Q)$ be a non degenerate lattice, then the \textit{discriminant group of} $\Gamma$ is the finite group
\[
A_\Gamma:=\Gamma^*/\Gamma,
\]
which has order $[\Gamma^*:\Gamma]=|\dis(\Gamma,Q)|$. When $\Gamma$ is an even lattice, there is a well defined quadratic form $q_{\Gamma}$ on the discriminant group given by the quadratic form $q$:
\[
q_\Gamma:A_\Gamma\rightarrow \Q/2\Z,\qquad q_\Gamma(v):=Q(v,v)\text{ mod }2\Z.
\]
\begin{prop}[see {\cite[Chapter I, Lemma 2.5]{bpv}}]\label{prop:primunim}
For a primitive sublattice $E$ of a unimodular lattice $S$ there is an isomorphism $A_E\rightarrow A_{E^\perp}$.
\end{prop}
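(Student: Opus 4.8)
The plan is to exhibit both $A_E$ and $A_{E^\perp}$ as the same quotient of $S$, namely $S/(E\oplus E^\perp)$, and then compose the two resulting isomorphisms. Throughout I tacitly assume $E$ is non-degenerate, which is needed for $A_E$ to be defined; since $S$ is unimodular (hence non-degenerate) and $E+E^\perp$ has full rank in $S$, a one-line argument then shows that $E^\perp$ is non-degenerate too, that $E\cap E^\perp=0$, and that $(E^\perp)^\perp=E$ (the last using primitivity of $E$).

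The core of the proof is the restriction homomorphism $r\colon S\to E^*$, $r(s):=Q(s,-)|_E$. First I would check $r$ is well defined, i.e.\ $r(s)\in E^*\subset E_\Q$: this is the standard identification of $\Hom(E,\Z)$ with $E^*$ using non-degeneracy of $E$. The key point is that $r$ is \emph{surjective}. Indeed, since $E$ is a primitive sublattice of the free $\Z$-module $S$ it is a direct summand, so every $\Z$-linear functional on $E$ extends to one on $S$; since $S$ is unimodular, every functional on $S$ has the form $Q(s,-)$ for a (unique) $s\in S$; restricting back to $E$ shows $r$ hits every element of $E^*$. Its kernel is visibly $E^\perp$, so $r$ induces an isomorphism $\bar r\colon S/E^\perp\xrightarrow{\ \sim\ }E^*$.

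Next I would trace the image of $E$: the composite $E\hookrightarrow S\twoheadrightarrow S/E^\perp\xrightarrow{\bar r}E^*$ sends $e\mapsto Q(e,-)|_E$, which is exactly the canonical embedding $E\hookrightarrow E^*$. Hence $\bar r$ carries $(E+E^\perp)/E^\perp$ onto $E\subset E^*$, and passing to quotients gives an isomorphism $S/(E\oplus E^\perp)\xrightarrow{\ \sim\ }E^*/E=A_E$. Swapping the roles of $E$ and $E^\perp$ (legitimate since $(E^\perp)^\perp=E$) yields in the same way $S/(E\oplus E^\perp)\xrightarrow{\ \sim\ }A_{E^\perp}$, and composing the inverse of the first with the second produces the desired isomorphism $A_E\to A_{E^\perp}$.

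The only genuinely substantive step is the surjectivity of $r$: it is the single place where both hypotheses are used essentially — primitivity of $E$ to extend functionals (equivalently, $E$ being a direct summand) and unimodularity of $S$ to represent functionals by lattice vectors — while everything else is bookkeeping about induced maps on finite quotients, the main care needed being the correct identification of the image of $E$ inside $E^*$. As a consistency check one can note that \eqref{eq:spultim} applied to $E\oplus E^\perp\subseteq S$ gives $\dis(E)\,\dis(E^\perp)=[S:E\oplus E^\perp]^2$ because $\dis(S)=\pm1$, which matches $|A_E|=|A_{E^\perp}|=[S:E\oplus E^\perp]$ predicted by the construction.
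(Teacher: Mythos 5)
Your proof is correct: the paper itself gives no argument for this proposition (it simply cites \cite[Chapter I, Lemma 2.5]{bpv}), and your construction — using primitivity to extend functionals, unimodularity to represent them as $Q(s,-)$, and identifying both $A_E$ and $A_{E^\perp}$ with $S/(E\oplus E^\perp)$ — is exactly the standard argument behind that cited lemma. The tacit non-degeneracy assumption on $E$ (and the deduced non-degeneracy of $E^\perp$, $E\cap E^\perp=0$, $(E^\perp)^\perp=E$) is the right reading of the statement, since the discriminant groups are only defined in that case.
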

\begin{defi}
The group of self-isometries of a lattice $(\Gamma,Q)$ is denoted by $O(\Gamma)$; an element of $O(\Gamma)$ is called an isometry (or automorphism) of $\Gamma$.
\end{defi}
An automorphism $M\in O(\Gamma)$ of the lattice extends $\Q$-linearly to an automorphism of the $\Q$-vector space $\Gamma_\Q$ which preserves the $\Q$-bilinear extension of $Q$. Therefore we have a homomorphism
\[
O(\Gamma)\rightarrow O(A_\Gamma),\qquad M\rightarrow \overline{M},
\]
where $\overline{M}$ is the induced action on the discriminant group.
\section{Glueing lattices and glueing automorphisms}\label{sec:glueing}
Let $(\Gamma,Q)$ be an even lattice, $E_1\subset \Gamma$ a primitive sublattice and $E_2:=E_1^{\perp}$. It is obvious that $E_1\oplus E_2\subseteq \Gamma\subseteq E_1^*\oplus E_2^*$.
\begin{rem}\label{rem:quadformortgen}
As $\Gamma$ is even and $E_1,E_2$ are perpendicular, if $s\in\Gamma$ and writing $s=(s_1,s_2)\in E_1^*\oplus E_2^*$ we get:
\[
Q(s,s)=Q(s_1,s_1)+Q(s_2,s_2)\in2\Z.
\]
So by the definition of the quadratic form on a discriminant group, we have that $q_{E_1}(s_1)=-q_{E_2}(s_2)$.
\end{rem}
\begin{rem}\label{rem:costruzioneisotropo}
Consider the abelian group $A_{E_1}\times A_{E_2}=(E_1^*\oplus E_2^*)/(E_1\oplus E_2)$ with quadratic form $q_{E_1}+q_{E_2}$. The lattice $\Gamma$ defines a subgroup
\[
I_\Gamma:=\Gamma/(E_1\oplus E_2)=\set{(s_1,s_2)\in A_{E_1}\times A_{E_2}:s\in \Gamma},
\]
that is isotropic (i.e.\ $(q_{E_1}+q_{E_2})|_{I_{\Gamma}}=0$), by Remark~\ref{rem:quadformortgen}, of cardinality $\#I_\Gamma=[\Gamma:E_1\oplus E_2]$.

Moreover, if $\Gamma$ is unimodular then $\Gamma/(E_1\oplus E_2)\cong A_{E_1}\cong A_{E_2}$ by Proposition~\ref{prop:primunim}. So there exists an isomorphism $\varphi:A_{E_1}\rightarrow A_{E_2}$ such that $q_{E_1}(a_1)=-q_{E_2}(\varphi(a_1))$ for all $a_1\in A_{E_1}$, and we simply write $q_{E_1}=-q_{E_2}$ in this case.
\end{rem}
\begin{defi}
Given two non degenerate lattices $(E_1,Q_1),(E_2,Q_2)$, we say that $(\Gamma,Q)$ arises from \textit{glueing} $(E_1,Q_1)$ and $(E_2,Q_2)$ if it is a finite index overlattice of $(E_1\oplus E_2,Q_1+Q_2)$.
\end{defi}
The following well known proposition gives a relation between glued lattices and isotropic subgroups.
\begin{prop}\label{prop:bijectionisot}
Let $(E_1,Q_1),(E_2,Q_2)$ be even non degenerate lattices.
There is a bijection between:
\begin{enumerate}
\item isotropic subgroups $I\subset A_{E_1}\times A_{E_2}$,
\item even lattices $(\Gamma,Q)$ obtained by glueing $(E_1,Q_1)$ and $(E_2,Q_2)$,
\end{enumerate}
given by
\begin{align*}
I&\mapsto \Gamma_I:=(\pi_1\times\pi_2)^{-1}(I),\\
\Gamma&\mapsto I_\Gamma:=\Gamma/(E_1\oplus E_2),
\end{align*}
where $\pi_i:E_i^*\rightarrow A_{E_i}$ for $i=1,2$ are the quotient maps.

Moreover if $A_{E_1}\cong A_{E_2}$ and $q_{E_1}=-q_{E_2}$ then the unimodular even lattices obtained by glueing $E_1$ with $E_2$ are in bijection with the (maximal) isotropic subgroups defined as $I_{\varphi}=\set{(x,\varphi(x)):x\in A_{E_1}}$ for suitable isomorphisms $\varphi:A_{E_1}\rightarrow A_{E_2}$ such that $q_{E_1}(x)=-q_{E_2}(\varphi(x))$ for all $x\in A_{E_1}$.
\end{prop}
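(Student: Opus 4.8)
The plan is to reduce the proposition to the correspondence theorem for submodules, supplemented by Remarks~\ref{rem:quadformortgen}--\ref{rem:costruzioneisotropo} and the discriminant formula~\eqref{eq:spultim}. First I would note that every even finite-index overlattice $\Gamma$ of $E_1\oplus E_2$ automatically lies between $E_1\oplus E_2$ and $(E_1\oplus E_2)^{*}=E_1^{*}\oplus E_2^{*}$: for $\gamma\in\Gamma$ and $v\in E_1\oplus E_2\subseteq\Gamma$ one has $Q(\gamma,v)\in\Z$, so $\gamma\in(E_1\oplus E_2)^{*}$. By the correspondence theorem the subgroups $\Gamma$ with $E_1\oplus E_2\subseteq\Gamma\subseteq E_1^{*}\oplus E_2^{*}$ then biject with the subgroups $I_\Gamma=\Gamma/(E_1\oplus E_2)$ of $(E_1^{*}\oplus E_2^{*})/(E_1\oplus E_2)=A_{E_1}\times A_{E_2}$, through the mutually inverse assignments $I\mapsto\Gamma_I=(\pi_1\times\pi_2)^{-1}(I)$ and $\Gamma\mapsto I_\Gamma$; here $\Gamma_I$ is automatically free (a submodule of a free module over a PID) and of finite index over $E_1\oplus E_2$, since $A_{E_1}\times A_{E_2}$ is finite.

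The next step is to single out the even lattices among the $\Gamma_I$. For $s=(s_1,s_2),\,t=(t_1,t_2)\in E_1^{*}\oplus E_2^{*}$ we have $Q(s,t)=Q(s_1,t_1)+Q(s_2,t_2)$ because $E_1\perp E_2$; in particular $Q(s,s)=Q(s_1,s_1)+Q(s_2,s_2)$, so by the definition of the forms $q_{E_i}$ the condition ``$Q(\gamma,\gamma)\in2\Z$ for every $\gamma\in\Gamma_I$'' is literally the condition ``$(q_{E_1}+q_{E_2})|_{I}=0$''. Since a $\Q$-valued symmetric form whose diagonal values all lie in $2\Z$ is automatically $\Z$-valued (polarize), this already shows that $\Gamma_I$ is an even lattice exactly when $I$ is isotropic; the ``only if'' is Remark~\ref{rem:quadformortgen}, and the ``if'' is the same computation read backwards. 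This yields the first bijection.

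For the \emph{moreover} part I would first compute discriminants. By~\eqref{eq:spultim}, $\dis(E_1)\dis(E_2)=\dis(E_1\oplus E_2)=[\Gamma_I:E_1\oplus E_2]^{2}\dis(\Gamma_I)=(\#I)^{2}\dis(\Gamma_I)$, hence $|\dis(\Gamma_I)|=\#A_{E_1}\cdot\#A_{E_2}/(\#I)^{2}$. Thus every isotropic $I$ satisfies $(\#I)^{2}\le\#A_{E_1}\#A_{E_2}$, and $\Gamma_I$ is unimodular precisely when equality holds; if $A_{E_1}\cong A_{E_2}$ of common order $n$, this means $\#I=n$, so the relevant $I$ are the \emph{maximal} isotropic subgroups. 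Now each $I_\varphi$ is isotropic — at once, since $q_{E_1}(x)+q_{E_2}(\varphi(x))=q_{E_1}(x)-q_{E_1}(x)=0$ when $q_{E_1}=-q_{E_2}$ via $\varphi$ — and has order $n$, so it produces a unimodular even overlattice, and distinct $\varphi$ give distinct $I_\varphi$. Conversely, if $\Gamma$ is unimodular and even then $I_\Gamma$ is isotropic of order $n$, and I claim each coordinate projection restricts to an isomorphism $I_\Gamma\to A_{E_i}$: one checks $\ker(\pi_1|_{I_\Gamma})=E_1^{\perp_\Gamma}/E_2$ and $\ker(\pi_2|_{I_\Gamma})=E_2^{\perp_\Gamma}/E_1$, so these kernels vanish when $E_1,E_2$ are primitive in $\Gamma$, and then $\#I_\Gamma=n$ promotes injectivity to bijectivity. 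Setting $\varphi:=(\pi_2|_{I_\Gamma})\circ(\pi_1|_{I_\Gamma})^{-1}$, the isotropy of $I_\Gamma$ rewrites as $q_{E_1}(x)=-q_{E_2}(\varphi(x))$, i.e.\ $I_\Gamma=I_\varphi$ for an admissible $\varphi$; this finishes the bijection.

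The step I expect to be the crux is the last one: showing that the maximal isotropic subgroup attached to a unimodular even overlattice is genuinely a \emph{graph}, equivalently that the projections $I_\Gamma\to A_{E_i}$ are bijective. Counting $\#I_\Gamma=n$ alone does not force this, since ``mixed'' maximal isotropic subgroups can occur in general; one must use that $E_1$ and $E_2$ embed primitively in $\Gamma$, so that their saturations coincide with themselves and the kernels $E_1^{\perp_\Gamma}/E_2$, $E_2^{\perp_\Gamma}/E_1$ vanish. This costs nothing in the applications below, where $E_1$ is a primitive sublattice of a unimodular lattice $\Gamma$ and $E_2=E_1^{\perp}$, so that both are automatically primitive in $\Gamma$ and Proposition~\ref{prop:primunim} moreover furnishes $A_{E_1}\cong A_{E_2}$ with $q_{E_1}=-q_{E_2}$.
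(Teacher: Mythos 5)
Your argument is correct and, for the main bijection, follows the same route as the paper: identify overlattices of $E_1\oplus E_2$ inside $E_1^*\oplus E_2^*$ via the correspondence theorem, observe that evenness of $\Gamma_I$ is literally the isotropy of $I$ (Remark~\ref{rem:quadformortgen} in one direction, the same computation backwards in the other), and recover $\Z$-valuedness of $Q$ by polarization. Where you genuinely diverge is in the \emph{moreover} part. The paper handles the forward direction in one line by citing Remark~\ref{rem:costruzioneisotropo}, which in turn rests on Proposition~\ref{prop:primunim} and hence silently on the primitivity of $E_1,E_2$ in $\Gamma$; you instead prove directly that the projections $I_\Gamma\to A_{E_i}$ are isomorphisms by computing their kernels as $E_1^{\perp_\Gamma}/E_2$ and $E_2^{\perp_\Gamma}/E_1$, and you correctly flag that $\#I_\Gamma=\#A_{E_1}$ alone does not force $I_\Gamma$ to be a graph: non-graph maximal isotropic subgroups do exist in general (e.g.\ for $E_1=E_2=U(2)$ one can take the product of two isotropic order-two subgroups), and for such $I$ the lattice $\Gamma_I$ is still an even unimodular overlattice of $E_1\oplus E_2$, but $E_1,E_2$ are no longer primitive in it. So your version makes explicit a hypothesis that the statement and the paper's proof leave implicit; in the applications (where $E_1$ is primitive in a unimodular $\Gamma$ and $E_2=E_1^\perp$) this costs nothing, as you note. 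Your discriminant count via~\eqref{eq:spultim} for the reverse implication coincides with the paper's.
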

\begin{proof}
Let $(\Gamma,Q)$ be a lattice such that $E_1\oplus E_2\subset \Gamma$, with finite index and such that $Q$ restricts to $Q_1+Q_2$ on $E_1\oplus E_2$, thus $E_2=E_1^\perp$ in $\Gamma$.
By Remark~\ref{rem:costruzioneisotropo}, $I_\Gamma$ is an isotropic subgroup of $A_{E_1}\times A_{E_2}$.

Every isotropic subgroup of $A_{E_1}\times A_{E_2}$ defines an even lattice $(\Gamma,Q)$ containing the $E_i$ as orthogonal sublattices. Indeed, let $I$ be an isotropic subgroup of $A_{E_1}\times A_{E_2}=E_1^*/E_1\times E_2^*/E_2$. Let $\Gamma_I:=(\pi_1\times\pi_2)^{-1}(I)$ and let $Q$ be the restriction to $\Gamma_I$ of the $\Q$-linear extension of $Q_1+Q_2$ to $E_1^*\times E_2^*$.
The isotropy condition shows that $Q(s,s)\in2\Z$ for all $s\in \Gamma_I$.
Moreover $Q(x,y)\in\Z$ for all $x,y\in \Gamma_I$, indeed:
\[
2\Z\ni Q(x+y,x+y)=Q(x,x)+Q(y,y)+2Q(x,y)
\]
and, since $Q$ is even, $Q(x,y)\in\Z$. Hence $(\Gamma_I,Q)$ is an even lattice.
Since $0\in I_\Gamma$, $\pi^{-1}(0)=E_1\oplus E_2\subseteq \Gamma_I$ and $E_1$, $E_2$ are obviously orthogonal.

Eventually, we prove the assertion about unimodular even lattices.
Suppose that $\Gamma$ is unimodular, then the claim follows by Remark~\ref{rem:costruzioneisotropo}.
Conversely, let $\varphi:A_{E_1}\rightarrow A_{E_2}$ be an isomorphism such that $q_{E_1}(a_1)=-q_{E_2}(\varphi(a_1))$. Let $I:=\set{(x,\varphi(x)):x\in A_{E_1}}\subset A_{E_1}\times A_{E_2}$, this is obviously isotropic and $\#I=\#A_{E_1}$. We know, see Formula~\eqref{eq:spultim}, that
\[
\dis(E_1\oplus E_2)=[\Gamma_I:E_1\oplus E_2]^2\cdot\dis(\Gamma).
\]
But $|\dis(E_1\oplus E_2)|=|\dis(E_1)|\cdot|\dis(E_2)|=(\#A_{E_1})^2$, hence $\dis(\Gamma)=\pm1$.
\end{proof}
We can also glue automorphisms on lattices obtained by glueing.
\begin{cor}\label{cor:esteniso}
Let $(E_1,Q_1),(E_2,Q_2)$ be two even non degenerate lattices and let $I,J\subset A_{E_1}\times A_{E_2}$ be two isotropic subgroups.

If $S_I,S_J$ are two overlattices of $E_1\oplus E_2$ defined by two isotropic subgroups $I,J$ and $M:S_I\rightarrow S_J$ is an isometry such that $M(E_i)\subseteq E_i$ for $i=1,2$ then $(\overline{M_1},\overline{M_2})(I)=J$, where $M_1:=M|_{E_1}$ and $M_2:=M|_{E_2}$.

Conversely, let $M_i\in O(E_i)$ for $i=1,2$ such that $(\overline{M_1},\overline{M_2})(I)=J$. Then $(M_1,M_2)$ extends to an isometry $M:S_I\rightarrow S_J$.
\end{cor}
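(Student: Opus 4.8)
The plan is to carry everything out inside the ambient $\Q$-vector space $(E_1)_\Q\oplus(E_2)_\Q$, in which $S_I$, $S_J$, the $E_i$ and the duals $E_i^*$ all sit, and to keep track of classes in $A_{E_1}\times A_{E_2}$ along the way, using the descriptions $S_I=(\pi_1\times\pi_2)^{-1}(I)$ and $I=I_{S_I}=\set{(\bar s_1,\bar s_2):(s_1,s_2)\in S_I}$ coming from Remark~\ref{rem:costruzioneisotropo} and Proposition~\ref{prop:bijectionisot} (and likewise for $J$, $S_J$). The argument then runs parallel to the proof of Proposition~\ref{prop:bijectionisot}.

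For the direct implication, the first step is to upgrade the hypothesis $M(E_i)\subseteq E_i$ to $M_i:=M|_{E_i}\in O(E_i)$. Since $M$ is an isometry, $M_i$ is an isometry of $E_i$ onto the sublattice $M(E_i)$, so $\dis M(E_i)=\dis E_i$; comparing with \eqref{eq:spultim} gives $[E_i:M(E_i)]^2=1$, hence $M(E_i)=E_i$ (here we use that $E_i$ is non-degenerate). Because $M(E_i)\subseteq E_i$ and $(E_1)_\Q\oplus(E_2)_\Q$ is a direct sum, the $\Q$-linear extension of $M$ is $(M_1)_\Q\oplus(M_2)_\Q$; in particular it preserves $E_1^*\oplus E_2^*$ and induces $(\overline{M_1},\overline{M_2})$ on $A_{E_1}\times A_{E_2}$, via the homomorphism $O(E_i)\to O(A_{E_i})$ of Section~\ref{sec:lattices}. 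Now for $(s_1,s_2)\in S_I$ one has $M(s_1,s_2)=(M_1s_1,M_2s_2)\in S_J$, whose class is $(\overline{M_1}(\bar s_1),\overline{M_2}(\bar s_2))\in J$; letting $(s_1,s_2)$ range over $S_I$ gives $(\overline{M_1},\overline{M_2})(I)\subseteq J$. Equality then follows because $M(E_1\oplus E_2)=E_1\oplus E_2$ forces $\#I=[S_I:E_1\oplus E_2]=[S_J:E_1\oplus E_2]=\#J$ and $(\overline{M_1},\overline{M_2})$ is injective.

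For the converse, one defines $M:=(M_1)_\Q\oplus(M_2)_\Q$ on $(E_1)_\Q\oplus(E_2)_\Q$, which is a $\Q$-linear isometry restricting to $M_i$ on each $E_i$, and checks that $M(S_I)=S_J$. Each $M_i\in O(E_i)$ preserves $E_i^*$ and induces $\overline{M_i}$ on $A_{E_i}$, so for $(s_1,s_2)\in S_I$ the element $M(s_1,s_2)$ lies in $E_1^*\oplus E_2^*$ with class $(\overline{M_1}(\bar s_1),\overline{M_2}(\bar s_2))\in(\overline{M_1},\overline{M_2})(I)=J$, whence $M(S_I)\subseteq S_J$. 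Running the same argument with $M^{-1}=(M_1^{-1})_\Q\oplus(M_2^{-1})_\Q$, whose induced map sends $J$ to $I$ (as $(\overline{M_1},\overline{M_2})$ is bijective), gives $M^{-1}(S_J)\subseteq S_I$, so $M$ is the desired isometry $S_I\to S_J$ extending $(M_1,M_2)$.

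The one point that is not pure bookkeeping is the surjectivity of $M|_{E_i}$ in the direct implication, i.e.\ passing from $M(E_i)\subseteq E_i$ to $M_i\in O(E_i)$; I expect to settle it with the discriminant formula \eqref{eq:spultim} exactly as above. Everything else amounts to unwinding the identifications from Remark~\ref{rem:costruzioneisotropo} and Proposition~\ref{prop:bijectionisot} and to the observation that isometries of $E_i$ act compatibly on $E_i^*$ and on the discriminant group $A_{E_i}$.
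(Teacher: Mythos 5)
Your proof is correct; the paper in fact states this corollary without proof, treating it as a standard consequence of the glueing formalism of Proposition \ref{prop:bijectionisot} and Remark \ref{rem:costruzioneisotropo}, and your argument is exactly the expected one along those lines. The one step beyond bookkeeping --- upgrading $M(E_i)\subseteq E_i$ to $M(E_i)=E_i$ via the discriminant formula \eqref{eq:spultim} so that $M_i\in O(E_i)$ and the index/counting argument gives equality $(\overline{M_1},\overline{M_2})(I)=J$ --- is handled correctly, so nothing is missing.
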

\section{Hilbert squares of projective K3 surfaces}\label{sec:K3surf}
In this section we recall some useful results on Hilbert squares of projective K3 surfaces which we will use in the following sections.

In \cite[Chapter VIII]{bpv} one can find basic properties on K3 surfaces.
We recall, to fix the notation, that if $S$ is a K3 surface then $H^2(S,\Z)$, endowed with the intersection product, is a lattice with signature $(3,19)$ isometric to $\lk:=U\oplus U\oplus U\oplus E_8(-1)\oplus E_8(-1)$, where $U$ is the hyperbolic plane and $E_8$ is the root lattice with rank 8.
\begin{defi}
Let $X$ be a nonsingular projective surface over $\C$. We define $X^{[n]}$, called \textit{Hilbert scheme of $n$ points on $X$}, as the space parametrizing 0-dimensional subschemes $Z\subset X$ of length $n$, i.e.\ $h^0(\mathcal{O}_Z)=n$ where $H^0(\mathcal{O}_Z)$ is the space of sections of the sheaf $\mathcal{O}_Z$.
\end{defi}
If $S$ is a projective K3 surface then $S^{[2]}$ is an irreducible holomorphic symplectic fourfold (see \cite[Section 21.2]{ghj}). Moreover, the second cohomology group $H^2(S^{[2]},\Z)$ carries a natural integral primitive quadratic form $q_{BB}:H^2(S^{[2]},\Z)\rightarrow\Z$, called the \textit{Beauville--Bogomolov form} (see for further details \cite[Section 4.2]{og} or \cite[Section 23]{ghj}).
\begin{prop}[see {\cite[\S6 Proposition 6]{be1}} and {\cite[Example 23.19]{ghj}}]\label{prop:svss2}
Let $S$ be a projective K3 surface. There exists an injective homomorphism $i:H^2(S,\Z)\hookrightarrow H^2(S^{[2]},\Z)$ which is compatible with the Hodge structures, and moreover
\[
H^2(S^{[2]},\Z)=i(H^2(S,\Z))\oplus\Z\xi,
\]
for a suitable $\xi\in H^2(S^{[2]},\Z)$ such that $\xi^2:=q_{BB}(\xi)=-2$.
\end{prop}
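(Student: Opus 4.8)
The plan is to construct the embedding $i$ and the class $\xi$ explicitly from the geometry of the Hilbert--Chow morphism, and then to verify the orthogonal direct sum decomposition by a rank-and-discriminant (or signature) count. First I would recall the Hilbert--Chow morphism $\rho\colon S^{[2]}\to S^{(2)}$ to the symmetric square, which is a resolution of singularities contracting the exceptional divisor $E$ (the locus of nonreduced subschemes) onto the diagonal $\Delta\subset S^{(2)}$; here $E$ is a $\pr^1$-bundle over $S$, and $[E]=2\delta$ for a natural class $\delta\in H^2(S^{[2]},\Z)$. Pulling back cohomology along $S^{(2)}$ and using the quotient map $S\times S\to S^{(2)}$ together with the Künneth decomposition and the $\mathfrak{S}_2$-invariance, one gets a canonical embedding of $H^2(S,\Z)$ into $H^2(S^{[2]},\Z)$; I would take this (or, equivalently, the map sending a class $\alpha$ on $S$ to the class on $S^{[2]}$ determined by subschemes meeting a fixed divisor representing $\alpha$) as the definition of $i$. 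Compatibility with Hodge structures is then automatic because all the maps involved — pullbacks, pushforwards, the quotient map, Künneth — are morphisms of Hodge structures, so $i$ sends $H^{p,q}(S)$ into $H^{p,q}(S^{[2]})$.

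Next I would set $\xi:=\delta$ and compute $q_{BB}(\xi)=-2$; this is the normalization built into the Beauville--Bogomolov form on $S^{[2]}$ and follows from Beauville's original computation (the self-intersection of the exceptional class, combined with the Fujiki relation, forces $q_{BB}(\delta)=-2$). I would also note that $i$ is an isometry onto its image for the Beauville--Bogomolov form, i.e.\ $q_{BB}(i(\alpha))=\alpha^2$ for $\alpha\in H^2(S,\Z)$, and that $i(H^2(S,\Z))$ is orthogonal to $\xi$ with respect to $q_{BB}$: the divisor $E$ meets the locus of subschemes supported on a general divisor of $S$ in a way that contributes zero to the form, reflecting that $\delta$ is orthogonal to the image of $H^2(S,\Z)$.

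Finally, to conclude that $H^2(S^{[2]},\Z)=i(H^2(S,\Z))\oplus\Z\xi$ as an \emph{orthogonal direct sum of lattices}, I would argue by ranks and discriminants. The rank of $H^2(S^{[2]},\Z)$ for an irreducible holomorphic symplectic fourfold of $K3^{[2]}$ type is $23$, while $H^2(S,\Z)$ has rank $22$, so the candidate sublattice $i(H^2(S,\Z))\oplus\Z\xi$ has the full rank $23$; it therefore has finite index in $H^2(S^{[2]},\Z)$. Since $H^2(S,\Z)\cong\lk$ is unimodular and $\xi^2=-2$, the orthogonal direct sum $i(H^2(S,\Z))\oplus\Z\xi$ has discriminant $\pm2$; on the other hand $H^2(S^{[2]},\Z)$ with $q_{BB}$ is known to have discriminant group $\Z/2\Z$, hence also discriminant $\pm2$. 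By Formula~\eqref{eq:spultim} the index squared divides $1$, so the index is $1$ and the inclusion is an equality. The main obstacle is the discriminant (equivalently unimodularity-versus-$\Z/2$) bookkeeping: one must be careful that the relevant lattice-theoretic invariants of $(H^2(S^{[2]},\Z),q_{BB})$ are exactly $\lk\oplus\langle-2\rangle$, and in a fully self-contained treatment this computation of $q_{BB}$ on $\delta$ and its orthogonality to $i(H^2(S,\Z))$ — rather than the abstract index count — is where the real work lies; I would import it from \cite{be1} and \cite{ghj} as cited.
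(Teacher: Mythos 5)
The paper itself offers no argument for this proposition -- it is recalled verbatim from Beauville \cite{be1} and \cite{ghj} -- so there is no internal proof to compare against; your sketch follows the standard construction (Hilbert--Chow morphism, exceptional divisor $E=2\delta$, $\xi=\delta$ with $q_{BB}(\delta)=-2$, and the symmetrized K\"unneth embedding $i$), and those ingredients, together with the Hodge compatibility, are all correct.

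However, your concluding rank-and-discriminant step has a genuine circularity. You finish by invoking that $(H^2(S^{[2]},\Z),q_{BB})$ ``is known to have discriminant group $\Z/2\Z$'' and then use Formula~\eqref{eq:spultim} to force the index of $i(H^2(S,\Z))\oplus\Z\delta$ to be $1$. But the identification of the Beauville--Bogomolov lattice of $S^{[2]}$ as $\lk\oplus\langle-2\rangle$ (equivalently, the statement that its discriminant is $\pm2$) is obtained in the literature precisely \emph{from} the decomposition $H^2(S^{[2]},\Z)=i(H^2(S,\Z))\oplus\Z\delta$; there is no independent computation of that discriminant you can appeal to, so your index count assumes what is to be proved. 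The substantive content of Beauville's proof is the integral surjectivity: one passes to the blow-up $B$ of $S\times S$ along the diagonal, which is a double cover $\pi\colon B\rightarrow S^{[2]}$ branched along $E$, shows (using $H^1(S,\Z)=0$ and a transfer argument controlling $2$-torsion) that $\pi^*$ identifies $H^2(S^{[2]},\Z)$ with the invariant part $H^2(S,\Z)\oplus\Z[\hat E]$ of $H^2(B,\Z)$, and that $\delta:=E/2$ is an integral class with $\pi^*\delta=[\hat E]$; the values $q_{BB}(\delta)=-2$, $q_{BB}(i(\alpha))=\alpha^2$ and the orthogonality are then comparatively easy computations. So your closing remark locates the ``real work'' in the wrong place: it lies in the integral generation statement, which your discriminant bookkeeping does not independently establish. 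If you intend to import that from \cite{be1} and \cite{ghj}, you are importing the whole proposition, which is exactly what the paper does by citation.
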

\begin{defi}\label{defi:defo}
Every smooth projective hyperkähler fourfold $F$ is called \textit{of $K3^{[2]}$-type} if there exists a projective K3 surface $S$ such that $F$ is a deformation of $S^{[2]}$.
\end{defi}
By the Torelli Theorem, two projective K3 surfaces $S$ and $S'$ are isomorphic if and only if there exists a Hodge isometry (i.e.\ an isometry which preserves the Hodge decompositions) between $H^2(S,\Z)$ and $H^2(S',\Z)$ that is effective (i.e.\ it maps ample classes in ample classes). The following theorem generalizes the previous statement to (smooth projective) hyperkähler fourfolds of $K3^{[2]}$-type. We will call it the \textit{Generalized Torelli Theorem}.
\begin{teor}[Verbitsky, Markman,~{\cite[Theorem 2.2]{dm}}]\label{teor:torelligen}
Let $F_1$ and $F_2$ be projective hyperkähler fourfolds of $K3^{[2]}$-type, with $h_1$ and $h_2$ ample classes of $F_1$ and $F_2$ respectively. Let $\Phi:H^2(F_1,\Z)\rightarrow H^2(F_2,\Z)$ be a Hodge isometry such that $\Phi(h_1)=h_2$; then there is an isomorphism $\varphi:F_2\rightarrow F_1$ such that $\Phi=\varphi^*$.
\end{teor}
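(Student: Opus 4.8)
This is a known result, due to Verbitsky and Markman and recorded in the stated form in \cite[Theorem 2.2]{dm}; since it cannot be reached from the lattice- and surface-theoretic material recalled so far, the ``proof'' consists in assembling those two deep inputs, and we sketch how. The plan is to factor the statement through the notion of a parallel transport operator: an isometry $H^2(F_1,\Z)\to H^2(F_2,\Z)$ is called a \emph{parallel transport operator} if it is induced by parallel transport of the local system $R^2\pi_*\Z$ along a path joining $F_1$ and $F_2$ in the base of a smooth proper family of compact hyperkähler manifolds.

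Since $F_1,F_2$ are of $K3^{[2]}$-type (Definition \ref{defi:defo}), and any two Hilbert squares of projective K3 surfaces are deformation equivalent, $F_1$ and $F_2$ lie in a single deformation class; hence parallel transport operators between them exist, they are all orientation-preserving (the orientation of a maximal positive definite subspace of $H^2(F_i,\R)$ is transported continuously along a family), and they form a torsor under the monodromy group $\mathrm{Mon}^2$. By Proposition \ref{prop:svss2} together with $H^2(S,\Z)\cong\lk$, we have $H^2(F_i,\Z)\cong\lk\oplus\langle\xi\rangle$ with $\xi^2=-2$, a lattice of signature $(3,20)$ whose discriminant group is $\Z/2\Z$; as every isometry acts trivially on $\Z/2\Z$, Markman's monodromy theorem for the $K3^{[2]}$ deformation class gives $\mathrm{Mon}^2=O^+(\lk\oplus\langle\xi\rangle)$, the full group of orientation-preserving isometries. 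Consequently a Hodge isometry $\Phi\colon H^2(F_1,\Z)\to H^2(F_2,\Z)$ is a parallel transport operator precisely when it is orientation-preserving.

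The step that uses the hypothesis $\Phi(h_1)=h_2$ is the verification that $\Phi$ is orientation-preserving. Fix holomorphic symplectic forms $\sigma_i\in H^{2,0}(F_i)$; then $W_i:=\langle\mathrm{Re}\,\sigma_i,\mathrm{Im}\,\sigma_i,h_i\rangle$ is a maximal positive definite subspace of $H^2(F_i,\R)$ (the first two vectors span a positive definite plane by the Hodge--Riemann relations, $h_i$ is $q_{BB}$-orthogonal to it, and $q_{BB}(h_i)>0$ since $h_i$ is ample, hence a Kähler class), oriented by $\mathrm{Re}\,\sigma_i\wedge\mathrm{Im}\,\sigma_i\wedge h_i$, which is the canonical orientation. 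Being a Hodge isometry, $\Phi$ maps $H^{2,0}(F_1)$ onto $H^{2,0}(F_2)$, so $\Phi(\sigma_1)=\lambda\sigma_2$ for some $\lambda\in\C^{*}$; writing $\lambda$ in polar form, $\Phi$ acts on $\langle\mathrm{Re}\,\sigma_1,\mathrm{Im}\,\sigma_1\rangle$ as a rotation followed by a positive dilation, so $\Phi(\mathrm{Re}\,\sigma_1)\wedge\Phi(\mathrm{Im}\,\sigma_1)=|\lambda|^2\,\mathrm{Re}\,\sigma_2\wedge\mathrm{Im}\,\sigma_2$; combining with $\Phi(h_1)=h_2$ gives $\Phi(\mathrm{Re}\,\sigma_1)\wedge\Phi(\mathrm{Im}\,\sigma_1)\wedge\Phi(h_1)=|\lambda|^2\,\mathrm{Re}\,\sigma_2\wedge\mathrm{Im}\,\sigma_2\wedge h_2$, which agrees with the canonical orientation of $W_2$. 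Hence $\Phi$ is orientation-preserving, so by the previous paragraph it is a parallel transport operator.

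Finally, invoke the Global Torelli Theorem of Verbitsky in its refined Kähler-cone form (cf.\ \cite[Theorem 2.2]{dm}): a parallel transport operator $\Phi\colon H^2(F_1,\Z)\to H^2(F_2,\Z)$ which is a Hodge isometry and which sends some Kähler class of $F_1$ to a Kähler class of $F_2$ is induced by a (unique) isomorphism $\varphi\colon F_2\to F_1$, i.e.\ $\Phi=\varphi^*$. The ample class $h_1$ is Kähler and $\Phi(h_1)=h_2$ is likewise ample, hence Kähler, so the Kähler-compatibility hypothesis holds and we obtain the desired $\varphi$. The main obstacle is conceptual rather than computational: the statement repackages very heavy machinery, so there is no elementary route, and within the assembly the two points to watch are the orientation check above and the use of the \emph{biregular} version of Global Torelli — which promotes a bimeromorphic map to a genuine isomorphism once a Kähler class is preserved — rather than its weaker purely birational counterpart.
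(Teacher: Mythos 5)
The paper offers no proof of this statement: it is quoted as the Generalized Torelli Theorem with the citation \cite[Theorem 2.2]{dm} and is used later as a black box. Your proposal reconstructs the standard derivation behind that citation, and it is essentially correct. You reduce to showing that $\Phi$ is a parallel transport operator, using Markman's monodromy theorem: for the $K3^{[2]}$ deformation class the monodromy group $\mathrm{Mon}^2$ is the full group of orientation-preserving isometries of $\lk\oplus\langle -2\rangle$, because the discriminant group is $\Z/2\Z$ and every isometry acts trivially on it; you correctly note that parallel transport operators exist (all these fourfolds are deformation equivalent) and preserve the canonical orientation of the positive cone, so being a parallel transport operator is equivalent to being orientation-preserving. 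The orientation check is the one place where the hypothesis $\Phi(h_1)=h_2$ is really used, and your computation on the positive three-space $\langle\mathrm{Re}\,\sigma_1,\mathrm{Im}\,\sigma_1,h_1\rangle$ (the $\sigma$-plane is mapped with determinant $|\lambda|^2>0$, and $h_1\mapsto h_2$) is correct, as is the final appeal to the Hodge-theoretic Global Torelli theorem in its K\"ahler-class form, with ample classes being K\"ahler on projective manifolds. Two cosmetic remarks: uniqueness of $\varphi$ (which you assert in passing) needs faithfulness of the $\Aut$-action on $H^2$ for $K3^{[n]}$-type, but it is not part of the statement; and the torsor argument implicitly uses that $O^+$ has index two in the full isometry group, which is standard. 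Compared with the paper, which imports the result wholesale, your route makes explicit which deep inputs (Verbitsky's Torelli theorem and Markman's monodromy computation) are consumed and exactly where the hypothesis $\Phi(h_1)=h_2$ enters; it is not more elementary, but it is a faithful account of the argument in the cited source.
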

\begin{rem}
Notice that, with the notation of Theorem \ref{teor:torelligen}, if $\Phi$ is a pullback of an isomorphism then it is obviously effective.
\end{rem}
\section{Pell-type equations, nef and movable cones of $S^{[2]}$}\label{sec:cones}
In this section we fix the notation on Pell-type equations and we recall how the solutions of this kind of equations are useful to understand the geometry of the nef and movable cones of the Hilbert squares (see Theorem \ref{teor:bayermacri}).
\begin{defi}
The \textit{Pell-type equation} $\mathcal{P}_d(m)$ is
\[
x^2-dy^2=m,
\]
where $d,m$ are non zero integers with $d>0$.

A solution $(U,V)\in\Z^2$ of $\Pell_d(m)$ is called $positive$ if $U>0$ and $V>0$, $minimal$ if it is positive and $U$ is as small as possible.
\end{defi}
\begin{lemma}[see~{\cite[Section 40, Lemma 14]{jon}}]\label{rem:pellcolmeno}
Assume $d$ is not a square. Let $(U,V)$ be the (positive) minimal solution of the equation $\mathcal{P}_d(1)$ and put $\varepsilon:=U+V\sqrt{d}$. The integral solutions of $\mathcal{P}_d(1)$ are obtained as integral powers of $\varepsilon$, in the sense that if $\pm\varepsilon^n=\frac{1}{2}(u+v\sqrt{d})$ with $n\in\Z$, $(u,v)$ is a new solution and all solutions can be obtained in this way.

Moreover, if $\mathcal{P}_d(-1)$ has a minimal solution $(U',V')$, then, putting $\eta:=U'+V'\sqrt{d}$, the integral solutions of $\Pell_d(1)$ are $\pm\eta^{2n}$ with $n\in\Z$.
\end{lemma}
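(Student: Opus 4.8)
The statement to prove is Lemma \ref{rem:pellcolmeno}, the standard description of the solutions of the Pell equation $\mathcal{P}_d(1)$ in terms of powers of the fundamental unit, together with the refinement involving $\mathcal{P}_d(-1)$. Since this is a classical fact, the plan is to reduce everything to the structure of the unit group of the ring $\Z[\sqrt d]$ (or, more precisely, of the norm-one elements), and then extract the arithmetic statement about integral pairs $(u,v)$.

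First I would set up the correspondence between solutions of $\mathcal{P}_d(m)$ and elements of $\Z[\sqrt d]$ of norm $m$: to the pair $(u,v)$ associate $\alpha = \frac12(u+v\sqrt d)$, whose norm is $N(\alpha)=\alpha\bar\alpha = \frac14(u^2-dv^2)$, adjusting by the factor so that $\mathcal{P}_d(m)$ corresponds to $N(\alpha)=m/4$ — or, to avoid the bookkeeping, just work directly with $x+y\sqrt d$ of norm $x^2-dy^2$ and carry the $\frac12$ through at the end as in the statement. The key inputs are: (i) the set of elements of norm $1$ forms a multiplicative group $G$; (ii) because $d$ is not a square, $\sqrt d$ is irrational, and a discreteness/compactness argument (embedding $x+y\sqrt d\mapsto(x+y\sqrt d,\,x-y\sqrt d)\in\R^2$, whose image is a discrete subgroup of the hyperbola-like level set) shows $G$ has no accumulation point, hence its positive part $G_{>1}$ has a smallest element; (iii) that smallest element is exactly $\varepsilon = U+V\sqrt d$ for the minimal positive solution $(U,V)$ of $\mathcal{P}_d(1)$.

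The core step is then the standard "no element strictly between $\varepsilon^n$ and $\varepsilon^{n+1}$" argument: if $\beta\in G_{>0}$ with $\varepsilon^n < \beta < \varepsilon^{n+1}$, then $1 < \beta\varepsilon^{-n} < \varepsilon$, contradicting minimality of $\varepsilon$; together with $G = G_{>0}\cup(-G_{>0})$ this gives $G = \{\pm\varepsilon^n : n\in\Z\}$, which is the first assertion. For the refinement: if $\mathcal{P}_d(-1)$ is solvable with minimal solution $(U',V')$ and $\eta = U'+V'\sqrt d$, then $N(\eta)=-1$, so $N(\eta^2)=1$ and $\eta^2\in G_{>1}$; one checks $\eta$ is the smallest element of norm $\pm1$ that is $>1$, so $\eta^2$ is the smallest element of $G_{>1}$, i.e.\ $\eta^2=\varepsilon$, whence the solutions of $\mathcal{P}_d(1)$ are precisely $\pm\eta^{2n}$.

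The main obstacle — really the only non-formal point — is establishing the existence of the fundamental unit, i.e.\ that $G_{>1}$ is nonempty and has a minimum; this is where the hypothesis that $d$ is not a square is essential, and it rests either on Dirichlet's pigeonhole approximation of $\sqrt d$ (infinitely many $|x^2-dy^2|$ bounded, hence some value taken infinitely often, then take quotients) or on the theory of continued fractions. Since the cited reference \cite{jon} contains exactly this, I would not reprove it in detail but quote it and then carry out the short group-theoretic deduction above, paying attention only to the elementary parity/divisibility check needed to pass between "$x+y\sqrt d$" and "$\frac12(u+v\sqrt d)$" so that the statement matches the form used later in the paper.
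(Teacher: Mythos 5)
The paper offers no proof of this lemma to compare against: it is quoted as a classical fact from Jones \cite[Section 40, Lemma 14]{jon} and used as a black box. Your reconstruction via norm-one elements of $\Z[\sqrt{d}]$ is the standard argument and is essentially sound: the multiplicative structure, the sandwich step $\varepsilon^{n}\le\beta<\varepsilon^{n+1}$ forcing $\beta=\pm\varepsilon^{n}$, and, for the second assertion, the fact that when $\Pell_d(-1)$ is solvable the generator of the full group of norm-$\pm1$ elements greater than $1$ must have norm $-1$ and hence equals $\eta$, giving $\varepsilon=\eta^{2}$. Two refinements are worth noting. First, the step you single out as the only non-formal obstacle (existence of the fundamental unit via Dirichlet approximation or continued fractions) is not actually needed: the hypothesis already supplies the minimal positive solution $(U,V)$, and minimality of $\varepsilon=U+V\sqrt{d}$ among norm-one elements $\beta>1$ follows directly, since $\beta>1$ and $\beta\bar{\beta}=1$ force both coordinates of $\beta$ to be positive integers, which then dominate $U$ and $V$; no discreteness or compactness argument is required. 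Second, the factor $\tfrac12$ in the statement is not pure bookkeeping as written: if $\pm\varepsilon^{n}=\tfrac12(u+v\sqrt{d})$, then $u^{2}-dv^{2}=4$, so the pairs $(u,v)$ appearing there are normalized as in Jones's treatment of automorphs of quadratic forms rather than as literal solutions of $\Pell_d(1)$; the ``parity/divisibility check'' you defer is precisely the reconciliation of these conventions, and since the rest of the paper (e.g.\ Lemma \ref{lemma:fortunatooo} and Lemma \ref{lemma:faticoso}) uses the lemma in the unscaled form $\pm\eta^{2n}$, that reconciliation should be made explicit rather than left implicit.
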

Let $X$ be a projective manifold, $\NS(X):=H^{1,1}(X)\cap H^2(X,\Z)$ is its \textit{Néron--Severi group} and $\T_X:=\NS(X)^{\perp}\subset H^2(X,\Z)$ is its \textit{transcendental lattice}.
The \textit{nef cone of $X$} is defined as the set $\text{Nef}(X)$ of all classes $\alpha\in \NS(X)\otimes\R$ with $\alpha\cdot C\ge 0$ for all curves $C\subset X$, while the \textit{movable cone} $\text{Mov}(X)$ is the cone generated by the movable classes (i.e.\ the classes of divisors $L$ on $X$ such that the base locus of $|L|$ has codimension $\ge2$).
The nef and the movable cones of $X$ are convex (see \cite[Chapter 8, Section 1]{hu2} and \cite[Section 6.5]{ma}).

Let $S$ be a K3 surface such that $\NS(S)=\Z h$ with $h$ ample and $h^2=2e$. We know that $H^2(S^{[2]},\Z)$ is a lattice with the Beauville--Bogomolov quadratic form $q_{BB}$, and it is naturally isometric to $H^2(S,\Z)\oplus\Z\xi$ (see Proposition \ref{prop:svss2}). We denote again by $h$ the class induced by $h$ in $H^2(S^{[2]},\Z)$.
\begin{rem}\label{rem:NShilbert}
The Néron--Severi of $S^{[2]}$ is $\NS(S^{[2]})=\Z h\oplus\Z\xi$.
\end{rem}
We will see now some results on the two-dimensional vector space $\NS(S^{[2]})\otimes\R$.
Every non trivial convex cone in $\R^2$ has obviously two boundary walls. Hence, if the Picard number of $S^{[2]}$ is two, there are two boundary walls for every non trivial convex cone $C$ in $\NS(S^{[2]})\otimes\R$.

The class $h$ is nef, not ample, and spans one of the two boundary walls of the nef cone $\text{Nef}(S^{[2]})\subset \NS(S^{[2]})\otimes\R$ (see~{\cite[Section 3.2]{dm}}). The other boundary wall of $\text{Nef}(S^{[2]})$ is spanned by a class $h-\nu_S\xi$, where $\nu_S$ is a positive real number. Similarly, the boundary walls of the movable cone $\text{Mov}(S^{[2]})$ are spanned by $h$ and $h-\mu_S\xi$.

The following result shows that $\nu_S$ and $\mu_S$ are rational numbers and only depend on the positive integer $e$.
\begin{teor}[Bayer--Macrì, {\cite[Section 13]{bm}}, or {\cite[Theorem 3.4]{dm}}]\label{teor:bayermacri}
Let $S$ be a K3 surface such that $\NS(S)=\Z h$ with $h$ ample and $h^2=2e$.
The slopes $\nu_S$ and $\mu_S$ are respectively equal to the rational numbers $\nu_e$ and $\mu_e$ defined as follows.
\begin{itemize}
	\item Assume that the equation $\mathcal{P}_{4e}(5)$ has no solutions.
	\begin{enumerate}
		\item If $e$ is a perfect square, we have $\nu_e:=\sqrt{e}$ and $\mu_e:=\sqrt{e}$.
		\item If $e$ is not a perfect square and $(a_1,b_1)$ is the minimal solution of the equation $\mathcal{P}_e(1)$, we have $\nu_e:=e\frac{b_1}{a_1}$ and $\mu_e:=e\frac{b_1}{a_1}$.
	\end{enumerate}
	\item Assume that the equation $\mathcal{P}_{4e}(5)$ has a solution.
	\begin{enumerate}
		\item If $(a_5,b_5)$ is its minimal solution, we have $\nu_e:=2e\frac{b_5}{a_5}$ and $\mu_e:=e\frac{b_1}{a_1}>\nu_e$.
	\end{enumerate}
\end{itemize}
\end{teor}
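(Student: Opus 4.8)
The statement is the translation, into the language of Pell-type equations, of the Bridgeland-stability and wall-crossing description of $\text{Nef}(S^{[2]})$ and $\overline{\text{Mov}}(S^{[2]})$ due to Bayer--Macr\`i (\cite{bm}); we outline the main steps.

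\emph{Step 1 (moduli-space model).} Realise $S^{[2]}$ as the moduli space $M_\sigma(v)$ of $\sigma$-stable objects of Mukai vector $v=(1,0,-1)$, for $\sigma$ a generic stability condition in the distinguished component $\operatorname{Stab}^{\dagger}(S)$ lying in the Gieseker chamber. The algebraic Mukai lattice is $\widetilde{H}_{\mathrm{alg}}(S)=\Z(1,0,0)\oplus\Z(0,h,0)\oplus\Z(0,0,1)$, of rank $3$ and signature $(2,1)$, with Mukai pairing $\langle(r,\ell h,s),(r',\ell' h,s')\rangle=2e\,\ell\ell'-rs'-r's$, so that $\langle v,v\rangle=2$. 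The Donaldson morphism $\theta\colon v^{\perp}\to\NS(S^{[2]})$ is an isometry of rank-two lattices, and the Bayer--Macr\`i map identifies a neighbourhood of $\sigma$ with a neighbourhood of an ample class of $S^{[2]}$ in $\NS(S^{[2]})_{\R}$, carrying the walls of $\operatorname{Stab}^{\dagger}(S)$ bordering the Gieseker chamber to the walls of $\overline{\text{Mov}}(S^{[2]})$ and of $\text{Nef}(S^{[2]})$.

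\emph{Step 2 (classification of the walls).} Each potential wall corresponds to a primitive rank-two sublattice $\mathcal{H}\subset\widetilde{H}_{\mathrm{alg}}(S)$ of signature $(1,1)$ containing $v$; the associated wall in $\NS(S^{[2]})_{\R}$ is the ray spanned by the image under $\theta$ of a generator of the rank-one lattice $v^{\perp}\cap\mathcal{H}$. Following Bayer--Macr\`i one decides whether $\mathcal{H}$ gives a genuine wall, and whether it is an (outer) wall of $\overline{\text{Mov}}(S^{[2]})$ of divisorial or fibration type or an interior flopping wall, according to whether $\mathcal{H}$ contains a spherical class $w$ (with $\langle w,w\rangle=-2$) or an isotropic one (with $\langle w,w\rangle=0$) with $0\le\langle w,v\rangle\le1$ --- the divisorial/fibration cases --- or not. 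Since $\langle v,v\rangle/2=1$ the numerical bounds that enter are very small, so the case analysis is short. The boundary of $\text{Nef}(S^{[2]})$ not spanned by $h$ comes from the wall of $\operatorname{Stab}^{\dagger}(S)$ nearest to the Gieseker chamber, and the non-$h$ boundary of $\overline{\text{Mov}}(S^{[2]})$ from the nearest one of divisorial or fibration type; this is exactly how $\nu_S$ and $\mu_S$ are detected.

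\emph{Step 3 (reduction to Pell equations and the slopes).} Writing $w=(r,\ell h,s)$ one has $\langle w,w\rangle=2(e\ell^{2}-rs)$ and $\langle w,v\rangle=r-s$, so $\langle w,w\rangle=-2$ and $\langle w,w\rangle=0$ become $rs-e\ell^{2}=1$ and $rs=e\ell^{2}$; after eliminating one variable using $\langle w,v\rangle\in\{0,\pm1\}$ and imposing that $\mathcal{H}$ have signature $(1,1)$, the existence and the minimal solutions of the resulting equations turn out to be governed precisely by $\Pell_{e}(1)$ and $\Pell_{4e}(5)$, whose solutions are enumerated via Lemma~\ref{rem:pellcolmeno}. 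Computing $\theta$ of a generator of $v^{\perp}\cap\mathcal{H}$, using $q_{BB}(h)=2e$, $q_{BB}(\xi)=-2$ and $h\perp\xi$, expresses the slope $\lambda$ of the wall $h-\lambda\xi$ in terms of the corresponding solution, and minimality of the Pell solution matches the requirement that one take the first wall crossed. Sorting the cases, one obtains $\nu_{e}=\mu_{e}=\sqrt{e}$ when $e$ is a square (the relevant wall being isotropic), $\nu_{e}=\mu_{e}=e\,b_{1}/a_{1}$ when $e$ is not a square and $\Pell_{4e}(5)$ has no solution (no flopping wall arises), and, when $\Pell_{4e}(5)$ is solvable, a single flopping wall attached to its minimal solution, giving $\nu_{e}=2e\,b_{5}/a_{5}<\mu_{e}=e\,b_{1}/a_{1}$.

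The delicate point is Step 2: deciding which numerical walls are genuine walls and of which type requires the full strength of Bayer--Macr\`i's wall-crossing analysis (totally semistable walls, the spherical/isotropic dichotomy, effectivity of the destabilising classes), together with the verification that the arithmetic threshold separating ``divisorial'' from ``flopping'' is exactly the solvability of $\Pell_{4e}(5)$. An essentially equivalent route avoids stability conditions entirely and uses the monodromy-invariant description of wall divisors on hyperk\"ahler fourfolds of $K3^{[2]}$-type (Hassett--Tschinkel, Mongardi, Bayer--Hassett--Tschinkel): a primitive $D\in\NS(S^{[2]})$ spans, through $D^{\perp}$, a wall of $\text{Nef}(S^{[2]})$ inside the positive cone iff $q_{BB}(D)=-2$, or $q_{BB}(D)=-10$ with $\operatorname{div}(D)=2$. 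Writing $D=\alpha h+\beta\xi$, the equation $q_{BB}(D)=-2$ reads $\beta^{2}-e\alpha^{2}=1$, i.e.\ $\Pell_{e}(1)$ (these, together with the isotropic classes $q_{BB}(D)=0$, account for the two boundary walls of $\overline{\text{Mov}}(S^{[2]})$, one of them always being $\xi^{\perp}=\R h$), while $q_{BB}(D)=-10$ with $\operatorname{div}(D)=2$ forces $\alpha=2\beta_{5}$ with $(\beta,\beta_{5})$ a solution of $\Pell_{4e}(5)$, yielding the flopping wall; the slopes $\nu_{e},\mu_{e}$ then follow by orthogonality. Either way, once the wall classes are identified, the remainder is the elementary manipulation of Pell solutions sketched in Step 3.
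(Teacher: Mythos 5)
The paper does not actually prove Theorem \ref{teor:bayermacri}: it is quoted as background, with the proof delegated to Bayer--Macr\`i \cite{bm} and Debarre--Macr\`i \cite{dm}, so there is no internal argument to measure your sketch against. Your outline follows precisely the route of those cited sources --- the wall-crossing description of $\text{Nef}(S^{[2]})$ and $\text{Mov}(S^{[2]})$ via the moduli-space model $M_\sigma(v)$, $v=(1,0,-1)$, or equivalently the wall-divisor description ($q_{BB}(D)=-2$, or $q_{BB}(D)=-10$ with divisibility $2$) used in \cite{dm} and \cite{bcns} --- and your arithmetic translation checks out: $q_{BB}(\alpha h+\beta\xi)=-2$ gives $\beta^2-e\alpha^2=1$, i.e.\ $\Pell_e(1)$; divisibility $2$ forces $\alpha$ even, turning $\beta^2-e\alpha^2=5$ into $\Pell_{4e}(5)$; and the slopes $\mu_e=e b_1/a_1$, $\nu_e=2e b_5/a_5$ (resp.\ $\sqrt{e}$ in the isotropic, square case) follow by orthogonality exactly as you say. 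The only loose point is the numerology in your Step 2 (Bayer--Macr\`i distinguish spherical classes with $\langle w,v\rangle=0$ for divisorial walls from isotropic classes with $\langle w,v\rangle\in\{1,2\}$ for Hilbert--Chow/Li--Qin walls, rather than the single bound $0\le\langle w,v\rangle\le 1$), but since you explicitly defer that classification to \cite{bm}, this is presentational rather than a gap.
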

\section{Fourier--Mukai partners of K3 surfaces}\label{sec:FMK3}
Let $S$ be a K3 surface. From now on if $V$ is a sublattice of $H^2(S,\Z)$ and $p,q\ge0$ such that $p+q=2$ then $V^{p,q}:=(V\otimes\C)\cap H^{p,q}(S)$. The same holds for $S^{[2]}$.
\begin{defi}\label{defi:FMpartners}
Let $S,S'$ be two K3 surfaces and $\T_S,\T_{S'}$ their transcendental lattices. We say that $S$ and $S'$ are \textit{Fourier--Mukai partners (or FM partners)} if there exists a Hodge isometry $\varphi:\T_S\rightarrow \T_{S'}$, i.e.\ an isometry $\varphi$ such that $\varphi_\C\big(\T_S^{p,q}\big)=\T_{S'}^{p,q}$.
\end{defi}
\begin{lemma}\label{lemma:isocap4end}
Let $S,S'$ be two K3 surfaces and $\T_S,\T_{S'}$ their transcendental lattices. An isometry $\varphi:\T_S\rightarrow \T_{S'}$ is a Hodge isometry if and only if $\varphi_\C\big(H^{2,0}(S)\big)=H^{2,0}(S')$.

In particular, if $f:H^2(S,\Z)\rightarrow H^2(S',\Z)$ is an isometry such that $f|_{\T_S}:\T_S\rightarrow \T_{S'}$ is a Hodge isometry, then $f$ is a Hodge isometry.
\end{lemma}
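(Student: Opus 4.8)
The plan is to reduce the Hodge-isometry condition for $\varphi\colon\T_S\to\T_{S'}$ to the single condition $\varphi_\C\big(H^{2,0}(S)\big)=H^{2,0}(S')$, using the Hodge-structure-theoretic description of the transcendental lattice. Recall that for a K3 surface $S$ one has $H^{2,0}(S)\oplus H^{0,2}(S)=\overline{H^{2,0}(S)}\oplus H^{2,0}(S)$, and that $\T_S\otimes\C=H^{2,0}(S)\oplus\big(\T_S^{1,1}\big)\oplus H^{0,2}(S)$ where $H^{2,0}(S)$ is one-dimensional, spanned by a holomorphic symplectic form $\omega_S$. The first direction is immediate: if $\varphi$ is a Hodge isometry then by definition $\varphi_\C\big(\T_S^{2,0}\big)=\T_{S'}^{2,0}$, and since $H^{2,0}(S)\subseteq\T_S\otimes\C$ (the period $\omega_S$ is orthogonal to all of $\NS(S)$, hence lies in $\T_S\otimes\C$), we get $\T_S^{2,0}=H^{2,0}(S)$ and therefore $\varphi_\C\big(H^{2,0}(S)\big)=H^{2,0}(S')$.

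For the converse, suppose $\varphi_\C\big(H^{2,0}(S)\big)=H^{2,0}(S')$. First I would note that $\varphi_\C$ is a real map in the sense that it commutes with complex conjugation (it is the complexification of a $\Z$-linear, hence $\R$-linear, map), so it also sends $\overline{H^{2,0}(S)}=H^{0,2}(S)$ to $\overline{H^{2,0}(S')}=H^{0,2}(S')$. Thus $\varphi_\C$ maps $H^{2,0}(S)\oplus H^{0,2}(S)$ isomorphically onto $H^{2,0}(S')\oplus H^{0,2}(S')$. Since $\varphi_\C$ is an isometry and $\T_S^{1,1}$ is exactly the orthogonal complement inside $\T_S\otimes\C$ of $H^{2,0}(S)\oplus H^{0,2}(S)$ (with respect to the $\C$-bilinear extension of $Q$), and similarly for $S'$, it follows that $\varphi_\C\big(\T_S^{1,1}\big)=\T_{S'}^{1,1}$. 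Together with the $(2,0)$ and $(0,2)$ cases this says precisely that $\varphi_\C$ preserves the Hodge decomposition, i.e.\ $\varphi$ is a Hodge isometry.

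For the last assertion, let $f\colon H^2(S,\Z)\to H^2(S',\Z)$ be an isometry whose restriction to $\T_S$ is a Hodge isometry. Then $f_\C\big(H^{2,0}(S)\big)=f_\C\big(\T_S^{2,0}\big)=\T_{S'}^{2,0}=H^{2,0}(S')$ by what we just proved. But on the whole of $H^2(S,\Z)$, the Hodge decomposition is again recovered from the line $H^{2,0}$: one has $H^2(S,\C)=H^{2,0}(S)\oplus\big(H^{2,0}(S)\oplus\overline{H^{2,0}(S)}\big)^\perp\oplus\overline{H^{2,0}(S)}$, with $H^{1,1}(S)$ equal to the middle orthogonal complement. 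Running the same conjugation-and-orthogonality argument as above, now inside all of $H^2(\cdot,\C)$, shows that $f_\C$ preserves the full Hodge decomposition, so $f$ is a Hodge isometry.

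The only point requiring any care — and the one I would treat as the "main obstacle", though it is really just a standard fact about K3 Hodge structures — is the identification of $H^{1,1}$ (respectively $\T^{1,1}$) as the orthogonal complement of $H^{2,0}\oplus H^{0,2}$ for the $\C$-bilinear (not Hermitian) extension of the intersection form; this is what lets an isometry that fixes the period line automatically respect the $(1,1)$-part. Everything else is formal linear algebra together with the reality of $\varphi_\C$ and $f_\C$.
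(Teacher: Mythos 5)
Your proposal is correct and follows essentially the same route as the paper: the paper's (one-line) proof rests exactly on the facts you isolate, namely that $\overline{H^{2,0}}=H^{0,2}$ and that $H^{1,1}$ is the orthogonal complement of $H^{2,0}\oplus H^{0,2}$ for the bilinear extension of the intersection form, so an isometry preserving the period line automatically preserves the whole decomposition. You have merely written out in full the linear-algebra details (reality of $\varphi_\C$, orthogonality argument, and the reduction of the second claim to the first) that the paper leaves implicit.
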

\begin{proof}
It follows trivially from the fact that $H^{2,0}(S)\perp H^{0,2}(S)$, $H^{1,1}(S)=(H^{2,0}(S)\oplus H^{0,2}(S))^\perp$ and $\overline{H^{2,0}(S)}=H^{0,2}(S)$.
\end{proof}
\begin{rem}\label{rem:extensionsquares}
In Lemma~\ref{lemma:isocap4end}, if we replace $S,S'$ with $S^{[2]},S'^{[2]}$ respectively, where $S,S'$ are projective K3 surfaces, an analogous result holds.
\end{rem}
Let $S$ be a K3 surface with Picard number one, such that $\NS(S)=\Z h$ with $h$ ample and $h^2=2e$.
Let $\T:=\T_S$ be the transcendental lattice of $S$ and let $t$ be an element in $\T$ such that $A_{\T}=\Braket{t/2e}$ and $t^2=-h^2$.
\begin{rem}\label{rem:hodgedevasto}
The only Hodge isometries $\varphi:\T\rightarrow\T$ are $\pm\id$. In other words $O_{\text{Hodge}}(\T)=\set{\pm\id}$ (see~\cite[Section 3.3, Corollary 3.5]{hu2}).
\end{rem}
We define
\[
\text{FM}(S):=\set{\text{isomorphism classes }[S']:\T_{S'}\underset{\text{Hodge}}{\cong}\T}.
\]
The following result is well known (see~{\cite[Proposition 1.10]{ogu}}). We provide a proof of this result using the glueing of lattices, since we will use the same idea in Section~\ref{sec:speranza}.
\begin{prop}[Oguiso]\label{prop:pen-ultimo}
Let $S$ be as above, then $\#\FM(S)=2^{p(e)-1}$, where $p(1)=1$ and $p(e)$, with $e\ge2$, is the number of primes $q$ such that $q|e$.
\end{prop}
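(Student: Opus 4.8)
\emph{Plan.} The strategy is to parametrize $\FM(S)$ by the glueings of the transcendental lattice $\T$ of $S$ with its Néron--Severi lattice, and then to count the resulting K3 surfaces by means of the Torelli Theorem. First, if $[S']\in\FM(S)$ then $\T_{S'}\cong\T$ as lattices, so inside $H^2(S',\Z)\cong\lk$ the sublattice $\T_{S'}\cong\T$ is primitive and $\NS(S')=\T_{S'}^{\perp}$ has rank one; since $\lk$ is unimodular, Proposition~\ref{prop:primunim} gives $A_{\NS(S')}\cong A_{\T_{S'}}\cong A_\T\cong\Z/2e\Z$, whence $\NS(S')\cong N:=\Z h$ with $h^2=2e$, its generator is ample, and $S'$ is projective; by Remark~\ref{rem:quadformortgen}, $q_\T=-q_N$ under $A_\T\cong A_N$. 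Thus all FM partners of $S$ share the lattices $\T$ and $N$.

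Fix $\T$ endowed with the Hodge structure of $\T_S$. By Proposition~\ref{prop:bijectionisot}, the even unimodular overlattices of $\T\oplus N$ are exactly the lattices $\Lambda_\varphi$ attached to the isotropic subgroups $I_\varphi=\set{(x,\varphi(x)):x\in A_\T}$, where $\varphi:A_\T\to A_N$ ranges over the isomorphisms with $q_N(\varphi(x))=-q_\T(x)$; composing with the inverse of one fixed such isomorphism shows these $\varphi$ form a torsor under $O(A_\T)$, so there are $\#O(A_\T)$ of them. Each $\Lambda_\varphi$ is even unimodular of signature $(3,19)$, hence isometric to $\lk$; since $\T\subset\Lambda_\varphi$ carries the Hodge structure of $\T_S$ and $N$ is declared of type $(1,1)$, surjectivity of the period map produces a projective K3 surface $S'_\varphi$ with $H^2(S'_\varphi,\Z)=\Lambda_\varphi$, $\NS(S'_\varphi)=N$ (here one uses $\NS(S)\cap\T_S=0$, so that $\T_S$, hence $\T$, has no nonzero integral $(1,1)$ class) and $\T_{S'_\varphi}=\T$ with the prescribed Hodge structure; thus $S'_\varphi\in\FM(S)$. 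Conversely, by the first paragraph every FM partner is of this form: a Hodge isometry $\T_{S'}\to\T$ together with an isometry $\NS(S')\to N$ identifies $H^2(S',\Z)$ with an even unimodular overlattice of $\T\oplus N$ (the index being $2e$, forced by \eqref{eq:spultim} and unimodularity of $\lk$), which is some $\Lambda_\varphi$.

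It remains to count the $S'_\varphi$ up to isomorphism. An isomorphism $S'_\varphi\cong S'_{\varphi'}$ amounts to an effective Hodge isometry $g:\Lambda_\varphi\to\Lambda_{\varphi'}$; such a $g$ preserves $\T$ and $N$ and restricts to $\pm\id$ on each, by Remark~\ref{rem:hodgedevasto} and $O(N)=\set{\pm\id}$. By Corollary~\ref{cor:esteniso} it exists iff $(\overline{g|_\T},\overline{g|_N})(I_\varphi)=I_{\varphi'}$, and computing the action of $(\varepsilon_1\id_{A_\T},\varepsilon_2\id_{A_N})$ on $I_\varphi$ shows this means $\varphi'=\varepsilon_1\varepsilon_2\,\varphi$, i.e.\ $\varphi'=\pm\varphi$; conversely, given $\varphi'=\pm\varphi$ one of the four sign pairs is effective (it sends the ample generator of $\NS(S'_\varphi)$ to that of $\NS(S'_{\varphi'})$) and induces the right glueing data, hence extends to an isomorphism by Corollary~\ref{cor:esteniso} and Lemma~\ref{lemma:isocap4end}. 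Therefore $\#\FM(S)=\#O(A_\T)/\#\set{\varphi,-\varphi}$, which equals $\#O(A_\T)/2$ when $e\ge2$ and $\#O(A_\T)$ when $e=1$, since $-\varphi=\varphi$ only for $e=1$. Finally, $O(A_\T)$ is the group of multiplications by a residue $u$ mod $2e$ with $u^{2}\equiv1\pmod{4e}$ (such $u$ are automatically units, and $u^2\bmod4e$ depends only on $u\bmod2e$); writing $e=2^{a}m$ with $m$ odd and applying the Chinese Remainder Theorem, this number is $2^{\omega(e)}$, where $\omega(e)$ is the number of distinct primes dividing $e$ (the odd part contributes $2^{\omega(m)}$, the $2$-part contributes $1$ if $a=0$ and $2$ if $a\ge1$). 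Hence $\#\FM(S)=2^{\omega(e)-1}=2^{p(e)-1}$ for $e\ge2$, and $\#\FM(S)=2^{\omega(1)}=1=2^{p(1)-1}$ for $e=1$.

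The step I expect to be the main obstacle is the counting: keeping the effectivity condition and the two independent sign actions on $A_\T$ and $A_N$ under control at once, and isolating the degeneracy at $e=1$, together with the elementary but slightly delicate $2$-adic part of the evaluation of $\#O(A_\T)$. Everything else is a direct application of Propositions~\ref{prop:primunim} and~\ref{prop:bijectionisot}, Corollary~\ref{cor:esteniso}, Remark~\ref{rem:hodgedevasto}, Lemma~\ref{lemma:isocap4end}, and the surjectivity of the period map.
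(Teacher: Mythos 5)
Your proposal is correct and follows essentially the same route as the paper: parametrize $\FM(S)$ by even unimodular overlattices of $\T\oplus\Z h$ via Proposition~\ref{prop:bijectionisot}, realize each glueing as a K3 surface by the surjectivity of the period map and Torelli, identify two glueings exactly when they differ by the $\pm\id$ actions (Corollary~\ref{cor:esteniso}, Remark~\ref{rem:hodgedevasto}), and count solutions of $a^2\equiv1\bmod 4e$ by the Chinese Remainder Theorem. Your torsor formulation under $O(A_\T)$, the explicit converse that every FM partner arises from such a glueing, and the separate treatment of the degenerate case $e=1$ are only slightly more detailed versions of the same argument.
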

\begin{proof}
Let $\pi_{\T}:\T^*\rightarrow A_{\T},\pi_{\Z h}:(\Z h)^*\rightarrow A_{\Z h}$ be the projection maps.
By Proposition \ref{prop:bijectionisot}, every even unimodular overlattice of $\T\oplus\Z h$ is of the form $\Gamma_I:=(\pi_{\T}\times\pi_{\Z h})^{-1}(I)$ for an isotropic subgroup $I\subset A_{\T}\times A_{\Z h}$. Note that $\Gamma_I\cong\lk$ as lattices, indeed they are even, indefinite and unimodular, with signature $(3,19)$, see \cite[Theorem 6, pag.\ 54]{se}. We fix a Hodge structure on $\Gamma_I$ by $\Gamma_I^{2,0}:=\T^{2,0}$. By the surjectivity of the period map, there exists a K3 surface $S_I$ such that $H^2(S_I,\Z)\cong\Gamma_I$ as Hodge structures. By the Weak Torelli Theorem (see \cite[Corollary 11.2]{bpv}) this surface is unique up to isomorphism.

Obviously such an $S_I$ is a FM partner of $S$, indeed every $S_I$ has transcendental lattice $\T$. Hence the problem is equivalent to counting all the even unimodular lattices $\Gamma$ obtained by the glueing of $\T$ and $\Z h$.

We know (see Proposition \ref{prop:primunim}) that $A_{\T}\cong A_{\Z h}$, moreover
\[
A_{\T}\times A_{\Z h}=\Braket{\frac{1}{2e}\Big(t,0\Big),\frac{1}{2e}\Big(0,h\Big)}.
\]
Let $I\subset A_{\T}\times A_{\Z h}$ be an isotropic subgroup that corresponds to an even unimodular overlattice $\Gamma_I$. The projections onto $A_{\T}$ and $A_{\Z h}$ are isomorphisms, so there is a unique $a\in(\Z/2e\Z)^*$ such that
\begin{equation}\label{eq:isotropohodge}
I:=I_a:=\Braket{\Big(\frac{1}{2e}t,\frac{a}{2e}h\Big)}\subset A_{\T}\times A_{\Z h}.
\end{equation}
Since $I$ has to be isotropic, we have $a^2\equiv1$ mod $4e$.

To solve our problem we have to determine when an isotropic subgroup $I_b$ defines a K3 surface $S_{I_b}$ such that $S_{I_a}\cong S_{I_b}$. For this purpose, we note that $H^2(S_{I_a},\Z)\cong H^2(S_{I_b},\Z)$ if and only if there exists a Hodge isometry $\varphi:\Gamma_{I_a}\rightarrow\Gamma_{I_b}$. Note that $\varphi|_T\in O_{\text{Hodge}}(\T)=\set{\pm\id}$ by Remark~\ref{rem:hodgedevasto} and $\varphi|_{\Z h}\in  O(\Z h)=\set{\pm\id}$, thus such a Hodge isometry $\varphi$ arises by glueing $\pm\id_{\T}$ with $\pm\id_{\Z h}$ (see Lemma \ref{lemma:isocap4end}). By Corollary \ref{cor:esteniso}, this is equivalent to either $I_a=I_b$ or $I_a=(\id,-\id)(I_b)$, i.e.\ $b=\pm a$.

It is easy to compute that $\#\set{a\in\Z/2e\Z:a^2\equiv1\,\,\text{mod }4e}=2^{p(e)}$, hence the number of isomorphism classes of FM partners of $S$ is $2^{p(e)}/2=2^{p(e)-1}$. The computation follows by the Chinese Remainder Theorem and the following well known facts:
\begin{enumerate}
	\item $\#((\Z/2\Z)^*)_2=1$,
	\item $\#((\Z/4\Z)^*)_2=2$,
	\item $\#((\Z/2^k\Z)^*)_2=4$ for all $k\ge3$,
	\item $\#((\Z/p^k\Z)^*)_2=2$ for all $p\ne2$ and $k\ge1$,
\end{enumerate}
where, given an abelian group $H$, $H_n$ is the subgroup of the $n$-torsion elements. Moreover, notice that if $4e=2^{k_0}p_1^{k_1}\dots p_n^{k_n}$ is the prime decomposition of $4e$ then
\[
\Big(\frac{\Z}{4e\Z}\Big)^*_2=\Big(\frac{\Z}{2^{k_0}\Z}\Big)^*_2\times\Big(\frac{\Z}{p_1^{k_1}\Z}\Big)^*_2\times\dots\times\Big(\frac{\Z}{p_n^{k_n}\Z}\Big)^*_2.\qedhere
\]
\end{proof}
\section{Glueing Hodge isometries}\label{sec:speranza}
\begin{defi}
Let $S$ be a K3 surface (not necessarily with Picard number one). We say that $S^{[2]}$ is \textit{ambiguous} if there exists a K3 surface $S'$ and an isomorphism $S^{[2]}\rightarrow S'^{[2]}$ which is not induced by any isomorphism $S\rightarrow S'$.

We say that $S^{[2]}$ is \textit{strongly ambiguous} if there exists such a K3 surface $S'$ which is in addition not isomorphic to $S$.
\end{defi}

We want to understand when there is strong ambiguity of Hilbert squares of projective K3 surfaces with Picard number one.
For this purpose, we follow the argument of Section~\ref{sec:FMK3}.
As before, let $S$ be a K3 surface with Picard number one, such that $\NS(S)=\Z h$ with $h$ ample and $h^2=2e$.
Let $\T=\T_S$ be the transcendental lattice of $S$ and let $t\in \T$ such that $A_{\T}=\Braket{\frac{1}{2e}t}$ and $t^2=-h^2$.
The following result of Ploog enables us to consider only the Hilbert squares of FM partners of $S$ to study if $S^{[2]}$ is strongly ambiguous.
\begin{teor}[see {\cite[Proposition 10]{p}}]\label{teor:isoFM}
Let $S,S'$ be projective K3 surfaces. If $S^{[2]}\cong S'^{[2]}$ then $S$ and $S'$ are FM partners.
\end{teor}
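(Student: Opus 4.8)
The plan is to show directly that an isomorphism $S^{[2]}\cong S'^{[2]}$ yields a Hodge isometry $\T_{S}\to\T_{S'}$, using the inclusion $i$ of Proposition~\ref{prop:svss2} to pass between a K3 surface and its Hilbert square.

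First I would record that any isomorphism $\varphi\colon S'^{[2]}\to S^{[2]}$ induces by pullback an isometry $\Phi:=\varphi^{*}\colon H^{2}(S^{[2]},\Z)\to H^{2}(S'^{[2]},\Z)$ for the Beauville--Bogomolov forms which is compatible with the Hodge decompositions; this is immediate since $q_{BB}$ is preserved by pullbacks along isomorphisms and $\varphi$ is holomorphic. As a Hodge isometry, $\Phi$ preserves both the $(1,1)$-parts and the integral structures, hence it sends $\NS(S^{[2]})=H^{1,1}(S^{[2]})\cap H^{2}(S^{[2]},\Z)$ isomorphically onto $\NS(S'^{[2]})$, and, being an isometry, it then sends $\T_{S^{[2]}}=\NS(S^{[2]})^{\perp}$ onto $\NS(S'^{[2]})^{\perp}=\T_{S'^{[2]}}$; the induced map $\Phi|_{\T_{S^{[2]}}}\colon\T_{S^{[2]}}\to\T_{S'^{[2]}}$ is again a Hodge isometry.

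Next I would identify $\T_{S^{[2]}}$ with $\T_{S}$ as a Hodge structure. By Proposition~\ref{prop:svss2}, $H^{2}(S^{[2]},\Z)=i(H^{2}(S,\Z))\oplus\Z\xi$ is an orthogonal splitting, with $i$ compatible with the Hodge decompositions and $\xi^{2}=-2$. Since $2\xi$ is the class of an effective divisor (the exceptional divisor of the Hilbert--Chow morphism), $\xi$ lies in $H^{1,1}(S^{[2]})$; alternatively, $\C\xi=i(H^{2}(S,\C))^{\perp}$ is a one-dimensional sub-Hodge structure of weight $2$, hence of pure type, and it cannot have type $(2,0)$ or $(0,2)$ since $\xi$ is a real class. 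Because $i$ is Hodge-compatible and injective and $H^{2,0}(S)$, $H^{2,0}(S^{[2]})$ are one-dimensional, $i_{\C}(H^{2,0}(S))=H^{2,0}(S^{[2]})$; together with $\xi\in H^{1,1}(S^{[2]})$ a short computation with the orthogonal splitting yields
\[
\NS(S^{[2]})=i(\NS(S))\oplus\Z\xi,\qquad \T_{S^{[2]}}=\bigl(i(\NS(S))\oplus\Z\xi\bigr)^{\perp}=i\bigl(\NS(S)^{\perp}\bigr)=i(\T_{S}).
\]
Since $i$ is Hodge-compatible, $i|_{\T_{S}}\colon\T_{S}\to\T_{S^{[2]}}$ is a Hodge isometry; the same applies to $S'$, producing a Hodge isometry $i'|_{\T_{S'}}\colon\T_{S'}\to\T_{S'^{[2]}}$.

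Finally I would compose: the map $\T_{S}\to\T_{S'}$ given by $(i'|_{\T_{S'}})^{-1}\circ\Phi|_{\T_{S^{[2]}}}\circ i|_{\T_{S}}$ is a composition of Hodge isometries, hence a Hodge isometry, which by Definition~\ref{defi:FMpartners} is precisely the assertion that $S$ and $S'$ are FM partners. Apart from this bookkeeping, the only delicate point is pinning down the Hodge type of $\xi$ — equivalently the identification $\T_{S^{[2]}}=i(\T_{S})$; once that is in place (via the exceptional divisor, or the sub-Hodge structure argument sketched above), the rest reduces to the functoriality of pullback and formal properties of orthogonal complements.
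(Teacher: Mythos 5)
Your proposal is correct and follows essentially the same route as the paper: pull back along the isomorphism to get a Hodge isometry of second cohomology, observe that it carries $\NS(S^{[2]})$ onto $\NS(S'^{[2]})$ and hence their orthogonal complements onto each other, and identify those complements with $\T_S$ and $\T_{S'}$ via the decomposition of Proposition~\ref{prop:svss2}. The only difference is that you spell out the identification $\T_{S^{[2]}}=i(\T_S)$ (the Hodge type of $\xi$), which the paper invokes as a standard property of the Beauville--Bogomolov form.
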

\begin{proof}
Let $f:S^{[2]}\rightarrow S'^{[2]}$ be an isomorphism. The map $f^*:H^2(S'^{[2]},\Z)\rightarrow H^2(S^{[2]},\Z)$ is a Hodge isometry. Hence $f^*$ maps the Néron--Severi group of $S'^{[2]}$ to the Néron--Severi group of $S^{[2]}$ isomorphically. By the properties of the Beauville--Bogomolov form on Hilbert schemes (see Section~\ref{sec:K3surf}) it follows that $\T_S\cong\NS(S^{[2]})^{\perp}$ and $\T_{S'}\cong\NS(S'^{[2]})^\perp$. But $f^*$ is an isometry, hence $f^*|_{\T_{S'}}:\T_{S'}\rightarrow \T_{S}$ is a well defined Hodge isometry.
\end{proof}
Since $\det(\lk\oplus\Braket{-2})=2$, the lattice $H^2(S,\Z)\oplus\Z\xi\cong\lk\oplus\Braket{-2}$ is not a unimodular lattice.

By Remark \ref{rem:NShilbert}, we get $A_{\NS(S^{[2]})}=A_{\Z h\oplus\Braket{-2}}=A_{\Z h}\times A_{\Braket{-2}}$ and
\[
A_{\T}\times A_{\Z h}\times A_{\Braket{-2}}
=\Braket{
\begin{pmatrix}
\frac{1}{2e}t,0,0
\end{pmatrix},
\begin{pmatrix}
0,\frac{1}{2e}h,0
\end{pmatrix},
\begin{pmatrix}
0,0,\frac{1}{2}\xi
\end{pmatrix}
}.
\]
\begin{lemma}\label{lemma:specialisot}
Let $J\subset A_{\T}\times A_{\Z h}\times A_{\Braket{-2}}$ be an isotropic subgroup which defines an even overlattice $\T\oplus \NS(S^{[2]})\hookrightarrow\Gamma_J$ with $\Gamma_J\cong\lk\oplus\Braket{-2}$ and $\T\subseteq\lk$, then $\#J=2e=\#A_{\T}$. Moreover, $J=\Braket{\begin{pmatrix}
\frac{1}{2e}t,\frac{a}{2e}h,\frac{z}{2}\xi
\end{pmatrix}}$ for suitable $a\in \Z/2e\Z$ and $z\in\Z/2\Z$.
\end{lemma}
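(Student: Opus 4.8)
The plan is to analyze the isotropic subgroup $J$ by exploiting the constraint that the resulting overlattice $\Gamma_J$ must be isometric to $\lk\oplus\Braket{-2}$, whose discriminant has absolute value $2$. First I would compute $|\dis(\T\oplus\NS(S^{[2]}))| = |\dis(\T)|\cdot|\dis(\Z h)|\cdot|\dis(\Braket{-2})| = (2e)\cdot(2e)\cdot 2 = 8e^2$. By Formula~\eqref{eq:spultim} applied to the inclusion $\T\oplus\NS(S^{[2]})\hookrightarrow\Gamma_J$, together with $\#J = [\Gamma_J : \T\oplus\NS(S^{[2]})]$ and $|\dis(\Gamma_J)| = 2$, we get $8e^2 = (\#J)^2\cdot 2$, hence $(\#J)^2 = 4e^2$ and $\#J = 2e = \#A_{\T}$.

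Next I would pin down the shape of $J$. The key observation is that $\T$ is required to be a primitive sublattice of $\Gamma_J$ (indeed $\T\subseteq\lk$ and $\lk$ is primitive in $\lk\oplus\Braket{-2}$ since the quotient is the free module $\Z\xi$; so $\T$ primitive in $\lk$ forces $\T$ primitive in $\Gamma_J$). Primitivity of $\T$ in $\Gamma_J$ means that the projection of $J=I_{\Gamma_J}$ onto the $A_{\T}$-factor is \emph{injective}: if some nonzero element of $J$ had trivial $A_{\T}$-component, then a corresponding vector of $\Gamma_J$ would lie in $\T_\Q\cap\Gamma_J$ outside $\T$ — wait, more carefully, injectivity of the projection $J\to A_{\T}$ is exactly the statement that $\Gamma_J\cap(\Z h\oplus\Braket{-2})_\Q = \Z h\oplus\Braket{-2}$, i.e.\ that $\NS(S^{[2]})$ sits primitively; but by Proposition~\ref{prop:primunim}-type reasoning the primitivity of $\T$ is equivalent to that of its orthogonal complement $\NS(S^{[2]})$ inside the (here non-unimodular) ambient lattice. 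I would instead argue directly: since $\#J = 2e = \#A_{\T}$ and the projection $\pi:J\to A_{\T}$ is a homomorphism of abelian groups, it suffices to show $\pi$ is surjective to conclude it is an isomorphism. Surjectivity holds because $\T$ is primitive in $\Gamma_J\cong\lk\oplus\Braket{-2}$: for any $v\in\T^*$ there is $w\in(\Z h\oplus\Braket{-2})^*$ with $(v,w)\in\Gamma_J$ — this is the standard fact that for a primitive sublattice the induced map on discriminant groups from the gluing is surjective onto the first factor (cf.\ the discussion following Remark~\ref{rem:costruzioneisotropo} and Proposition~\ref{prop:primunim}). Granting surjectivity, $\pi$ is an isomorphism, so $J$ is cyclic generated by the unique element mapping to the generator $\frac{1}{2e}t$ of $A_{\T}$; writing that generator in coordinates gives $J = \Braket{\begin{pmatrix}\frac{1}{2e}t,\frac{a}{2e}h,\frac{z}{2}\xi\end{pmatrix}}$ for some $a\in\Z/2e\Z$ and $z\in\Z/2\Z$, using $A_{\Z h}=\Braket{\frac{1}{2e}h}$ and $A_{\Braket{-2}}=\Braket{\frac{1}{2}\xi}$.

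The main obstacle I expect is making the primitivity/surjectivity step fully rigorous in the non-unimodular setting: Proposition~\ref{prop:primunim} is stated only for primitive sublattices of \emph{unimodular} lattices, and $\Gamma_J\cong\lk\oplus\Braket{-2}$ is not unimodular. I would handle this by observing that $\lk$ \emph{is} unimodular and $\T\subseteq\lk$ is primitive there, so $A_{\T}\cong A_{\T^{\perp_{\lk}}}$ and the gluing of $\T$ with $\T^{\perp_{\lk}}$ inside $\lk$ already realizes all of $A_{\T}$; then $\Gamma_J = \lk\oplus\Braket{-2}$ as a sublattice decomposition with $\NS(S^{[2]}) \supseteq \T^{\perp_{\lk}}\oplus\Braket{-2}$ — but one must be careful that $\NS(S^{[2]}) = \Z h\oplus\Z\xi$ need not split as an orthogonal sum inside $\lk\oplus\Braket{-2}$ compatibly. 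The cleanest route is simply: $J\to A_{\T}$ injective $\iff$ $\T$ primitive in $\Gamma_J$; $\T$ is primitive in $\Gamma_J$ because $\Gamma_J/\T \hookrightarrow \Gamma_J/\T \cong$ (something torsion-free would be too strong) — rather, $\T$ is primitive in $\lk$ and $\lk$ is primitive (a direct summand) in $\lk\oplus\Braket{-2}$, and primitivity is transitive, so $\T$ is primitive in $\Gamma_J$; hence $J\hookrightarrow A_{\T}$, and since $\#J=2e=\#A_{\T}$ this injection is an isomorphism. That finishes the identification of $J$ and of $\#J$.
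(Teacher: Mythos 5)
Your discriminant count giving $\#J=2e$ is exactly the paper's computation and is fine. The gap is in the step you finally settle on (your ``cleanest route''): the equivalence you assert there is backwards. For an overlattice $\Gamma_J$ of $\T\oplus\NS(S^{[2]})$, the kernel of the projection $p_{\T}:J\rightarrow A_{\T}$ is $\big(\Gamma_J\cap(\NS(S^{[2]})\otimes\Q)\big)/\NS(S^{[2]})$, so $p_{\T}$ is injective if and only if $\NS(S^{[2]})$ is primitive in $\Gamma_J$ --- which you yourself stated correctly earlier in the proposal --- while primitivity of $\T$ controls injectivity of the projection onto the \emph{other} factor $A_{\Z h}\times A_{\Braket{-2}}$. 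Thus from ``$\T$ is primitive in $\Gamma_J$'' (true, by transitivity through $\lk$) you only get $J\hookrightarrow A_{\Z h}\times A_{\Braket{-2}}$, a group of order $4e$, and together with $\#J=2e$ this does not force $J$ to be cyclic with $A_{\T}$-component a generator: for instance, when $e\equiv1$ mod $4$ the isotropic class $\big(0,\tfrac12 h,\tfrac12\xi\big)$ could a priori lie in $J$ without contradicting anything you have proved. The ``standard fact'' you invoke in the middle (a primitive sublattice forces the glue group to surject onto its discriminant group) is likewise false without unimodularity of the ambient lattice: take $E_1=\Z a$ with $a^2=-2$, $E_2=\Z b$ with $b^2=8$ and $\Gamma=\langle a,\tfrac12 b\rangle$; here $E_1$ is primitive but the glue group projects to $0$ in $A_{E_1}$.

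What closes the argument --- and is precisely the paper's proof --- is the surjectivity route you sketched and then abandoned. Since $\T$ is primitive in $\lk$ and $\lk$ is unimodular, the restriction map $\lk=\lk^*\rightarrow\T^*$ (equivalently, orthogonal projection onto $\T\otimes\Q$) is surjective; as $\lk\subseteq\Gamma_J$ and the decomposition $\Gamma_J\otimes\Q=(\T\otimes\Q)\oplus(\NS(S^{[2]})\otimes\Q)$ is orthogonal, every class of $A_{\T}$ is the $A_{\T}$-component of some element of $\Gamma_J$, i.e.\ $p_{\T}:J\rightarrow A_{\T}$ is surjective. With $\#J=2e=\#A_{\T}$ it is then an isomorphism, so $J$ is cyclic with a generator whose $A_{\T}$-component is $\tfrac{1}{2e}t$, and reading off the remaining components gives $J=\Braket{\big(\tfrac{1}{2e}t,\tfrac{a}{2e}h,\tfrac{z}{2}\xi\big)}$. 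Your worry about $\Z h\oplus\Z\xi$ not splitting compatibly inside $\lk\oplus\Braket{-2}$ is immaterial here: one only needs that the orthogonal projection of $\lk$ onto $\T\otimes\Q$ equals $\T^*$, which is intrinsic to $\T$ and the form. Replacing your final injectivity argument by this surjectivity argument makes the proof complete, and identical in substance to the paper's.
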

\begin{proof}
We know that $\#J=[\Gamma_J:(\T\oplus \NS(S^{[2]}))]$, thus (see Formula \ref{eq:spultim})
\[
\dis(\T\oplus \NS(S^{[2]}))=(\#J)^2\cdot\dis(\Gamma_J),
\]
but $\dis(\Gamma_J)=2$ and $\dis(\T\oplus \NS(S^{[2]}))=\dis(\T)\cdot\dis(\Z h)\cdot\dis(\Z\xi)=(-2e)\cdot 2e \cdot(-2)=8e^2$. Hence $\#J=2e$.

We know that $\T\subset\lk\hookrightarrow\lk\oplus\Braket{-2}$, hence the projections $\lk^*\oplus\Braket{-2}^*\rightarrow A_{\T}$, $\lk^*\oplus\Braket{-2}^*\rightarrow J$ are surjective. It follows that also the projection $p_{\T}:J\rightarrow A_{\T}$ is surjective.
In particular $\#J=\#A_{\T}$, so $p_{\T}$ is an isomorphism.
As $A_{\T}\cong\Z/2e\Z$ is cyclic, also $J$ is cyclic and it is generated by $\begin{pmatrix}
\frac{1}{2e}t,\frac{a}{2e}h,\frac{b}{2}\xi
\end{pmatrix}$ for suitable $a\in\Z/2e\Z$ and $z\in\Z/2\Z$, as in the claim.
\end{proof}
The isotropy condition for $J$ as in Lemma \ref{lemma:specialisot} is $-1/2e+a^2/2e-z^2/2\in2\Z$, that is equivalent to $-1+a^2-ez^2\in4e\Z$. Hence $a^2-ez^2\equiv1$ $\text{mod }4e$. We have two cases:
\begin{itemize}
\item{Case $z=0$}

We have $a^2\equiv1$ mod $4e$, as in Section~\ref{sec:FMK3}. In particular there exists a trivial bijection
\[
\{J_a:=\Braket{\begin{pmatrix}
\frac{1}{2e}t,\frac{a}{2e}h,0
\end{pmatrix}}\}\leftrightarrow\{I_a:=\Braket{\begin{pmatrix}
\frac{1}{2e}t,\frac{a}{2e}h
\end{pmatrix}}\},
\]
where $I_a\hookrightarrow A_{\T}\times A_{\Z h}$ is as in \eqref{eq:isotropohodge}.
\item{Case $z=1$}

We have $a^2\equiv1+e$ mod $4e$.
\end{itemize}
The following result will allow us to consider only the case $z=0$.
\begin{lemma}\label{lemma:stragrande}
With the notation of Lemma \ref{lemma:specialisot}, let $J_a:=\Braket{\begin{pmatrix}
\frac{1}{2e}t,\frac{a}{2e}h,0
\end{pmatrix}}$ with $a^2\equiv1$ mod $4e$. We fix a Hodge structure on $\Gamma_{J_a}$ by ${\Gamma_{J_a}}^{2,0}:=T^{2,0}$. Then $\Gamma_{J_a}\cong H^2({S_{I_a}}^{[2]},\Z)$ as Hodge structures, where $I_a:=\Braket{\begin{pmatrix}
\frac{1}{2e}t,\frac{a}{2e}h
\end{pmatrix}}$ and $S_{I_a}$ arises as in the proof of Proposition \ref{prop:pen-ultimo}.
\end{lemma}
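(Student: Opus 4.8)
The plan is to exploit that the third coordinate of the generator of $J_a$ is $0$, which forces the glued lattice $\Gamma_{J_a}$ to split off $\Z\xi$ orthogonally with complementary summand exactly $\Gamma_{I_a}$, and then to match this splitting with the one of $H^2(S_{I_a}^{[2]},\Z)$ supplied by Proposition~\ref{prop:svss2}. First I would check the lattice identity $\Gamma_{J_a}=\Gamma_{I_a}\oplus\Z\xi$ inside $\T^*\oplus(\Z h)^*\oplus(\Z\xi)^*$: an element $(v_1,v_2,v_3)$ lies in $\Gamma_{J_a}=(\pi_\T\times\pi_{\Z h}\times\pi_{\Braket{-2}})^{-1}(J_a)$ precisely when its class in $A_\T\times A_{\Z h}\times A_{\Braket{-2}}$ is a multiple of $\bigl(\tfrac{1}{2e}t,\tfrac{a}{2e}h,0\bigr)$, i.e.\ when $(v_1,v_2)\in\Gamma_{I_a}$ and $v_3\in\Z\xi$; since $\xi$ is orthogonal to $\T^*\oplus(\Z h)^*$ and $\xi^2=-2$ this is an orthogonal direct sum (in particular $\Gamma_{I_a}\cong\lk$, consistently with $\Gamma_{J_a}\cong\lk\oplus\Braket{-2}$ from Lemma~\ref{lemma:specialisot}).

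Next I would bring in $S_{I_a}$. From its construction in the proof of Proposition~\ref{prop:pen-ultimo} there is a Hodge isometry $\psi:\Gamma_{I_a}\xrightarrow{\sim}H^2(S_{I_a},\Z)$, the Hodge structure on $\Gamma_{I_a}$ being the one with $(2,0)$-part $\T^{2,0}$; and by Proposition~\ref{prop:svss2} there is a Hodge-compatible isometric embedding $i:H^2(S_{I_a},\Z)\hookrightarrow H^2(S_{I_a}^{[2]},\Z)$ with $H^2(S_{I_a}^{[2]},\Z)=i\bigl(H^2(S_{I_a},\Z)\bigr)\oplus\Z\xi'$, the sum being orthogonal with $(\xi')^2=-2$. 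I would then define $\Phi:\Gamma_{J_a}=\Gamma_{I_a}\oplus\Z\xi\to H^2(S_{I_a}^{[2]},\Z)$ by $\Phi|_{\Gamma_{I_a}}=i\circ\psi$ and $\Phi(\xi)=\xi'$; as both sides are orthogonal direct sums of the matching rank-$22$ and rank-$1$ pieces, $\Phi$ is an isometry.

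Finally I would check that $\Phi$ respects the Hodge decompositions. By construction $\Gamma_{J_a}^{2,0}=\T^{2,0}=\Gamma_{I_a}^{2,0}$, so $\Phi_\C\bigl(\Gamma_{J_a}^{2,0}\bigr)=i\bigl(\psi_\C(\T^{2,0})\bigr)=i\bigl(H^{2,0}(S_{I_a})\bigr)=H^{2,0}(S_{I_a}^{[2]})$, the last equality holding because $i$ is Hodge-compatible and the $(2,0)$-parts of $H^2(S_{I_a},\Z)$ and of $H^2(S_{I_a}^{[2]},\Z)$ are both one-dimensional. Since $\Phi$ is an isometry defined over $\Q$, it commutes with complex conjugation and preserves orthogonal complements, so $\Phi_\C$ also carries $\Gamma_{J_a}^{0,2}$ and $\Gamma_{J_a}^{1,1}$ onto $H^{0,2}(S_{I_a}^{[2]})$ and $H^{1,1}(S_{I_a}^{[2]})$ — this is precisely the elementary argument underlying Lemma~\ref{lemma:isocap4end} and Remark~\ref{rem:extensionsquares}. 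Hence $\Phi$ is a Hodge isometry and $\Gamma_{J_a}\cong H^2(S_{I_a}^{[2]},\Z)$ as Hodge structures. I do not expect a genuine obstacle here: the whole argument rests on the single elementary observation that the third component of the generator of $J_a$ vanishes, which is what produces the orthogonal splitting $\Gamma_{J_a}=\Gamma_{I_a}\oplus\Z\xi$; the only side point worth double-checking is that $\xi'$ is of type $(1,1)$, which is immediate since $\xi'$ is a real class orthogonal to the one-dimensional $(2,0)$-part of $H^2(S_{I_a}^{[2]},\Z)$.
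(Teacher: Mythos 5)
Your proposal is correct and follows essentially the same route as the paper: the vanishing third component of the generator of $J_a$ gives the orthogonal splitting $\Gamma_{J_a}=\Gamma_{I_a}\oplus\Braket{-2}$, which is then matched with $H^2({S_{I_a}}^{[2]},\Z)=i\bigl(H^2(S_{I_a},\Z)\bigr)\oplus\Z\xi$ from Proposition~\ref{prop:svss2} using the Hodge isometry $\Gamma_{I_a}\cong H^2(S_{I_a},\Z)$ from the proof of Proposition~\ref{prop:pen-ultimo}. You simply spell out the Hodge-compatibility check (via the argument of Lemma~\ref{lemma:isocap4end} and Remark~\ref{rem:extensionsquares}) that the paper leaves implicit.
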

\begin{proof}
Suppose that $z=0$. Let $\pi_1:\T^*\rightarrow A_{\T}$, $\pi_2:(\Z h)^*\rightarrow A_{\Z h}$ and $\pi_3:\Braket{-2}^*\rightarrow A_{\Braket{-2}}$ be the quotient maps. We have $\pi_3^{-1}(0)=\Braket{-2}$.
It follows immediately that $\Gamma_J=(\pi_1\times\pi_2\times\pi_3)^{-1}(J)=(\pi_1\times\pi_2)^{-1}\big(\Braket{\begin{pmatrix}
\frac{1}{2e}t,\frac{a}{2e}h
\end{pmatrix}}\big)\oplus\Braket{-2}$, where $a^2\equiv1$ mod $4e$. We define $I_a:=\Braket{\begin{pmatrix}
\frac{1}{2e}t,\frac{a}{2e}h
\end{pmatrix}}$. As in Proposition~\ref{prop:pen-ultimo}, one can see that there exists only one (up to isomorphism) K3 surface $S_{I_a}$ such that $H^2(S_{I_a},\Z)\cong(\pi_1\times\pi_2)^{-1}(I_a)$ as Hodge structures.
\end{proof}
\begin{rem}\label{rem:conclz=1}
A K3 surface $X\in \FM(S)$ is such that $X\cong {S_{I_a}}$ for a suitable $I_a:=\Braket{\begin{pmatrix}
\frac{1}{2e}t,\frac{a}{2e}h
\end{pmatrix}}$ with $a^2\equiv1$ mod $4e$, by the proof of Proposition~\ref{prop:pen-ultimo}. By Lemma~\ref{lemma:stragrande} and the Generalized Torelli Theorem (see Theorem~\ref{teor:torelligen}), $X^{[2]}$ is a strongly ambiguous Hilbert square if there exists an effective Hodge isometry between $\Gamma_{J_a}$ and $\Gamma_{J_b}$ such that $S_{I_a}\not\cong S_{I_b}$, i.e.\ such that $a\ne\pm b$ (see the proof of Proposition~\ref{prop:pen-ultimo}).
\end{rem}
Now we study when there exists a Hodge isometry between $H^2({S_{I_a}}^{[2]},\Z)$ and $H^2({S_{I_b}}^{[2]},\Z)$ with $a\ne\pm b$.

As $O_{\text{Hodge}}(\T)=\set{\pm\id}$, we want to find for which subgroups $J_a,J_b\subset A_{\T}\times A_{\NS(S^{[2]})}$ we have
\[
(\pm\bar{\id},\bar{\varphi})(J_a)=J_b
\]
for some $\varphi\in O(\NS(S^{[2]}))$ and $a\ne\pm b$. Recall that $\bar\varphi$ means the image of $\varphi$ by the map $O(\NS(S^{[2]}))\rightarrow O(A_{\NS(S^{[2]})})$, the same holds with $T$ instead of $\NS(S^{[2]})$.
\begin{rem}[see~{\cite[Section 3.2]{gal}}]\label{rem:galsuhilbert}
If $e$ is not a square then $O(\NS(S^{[2]}))\cong\Braket{\boldsymbol{\theta},\boldsymbol{\alpha}}\times\set{\pm\id}$, where, on the basis $h,\xi$ of $\NS(S^{[2]})$:
\[
\boldsymbol{\theta}:=\begin{pmatrix}U&V\\eV&U\end{pmatrix},\qquad\boldsymbol{\alpha}:=\begin{pmatrix}1&0\\0&-1\end{pmatrix},
\]
and $(U,V)$ is the minimal positive solution of $\Pell_e(1)$.

If $e$ is a square then $O(\NS(S^{[2]}))=\Braket{\boldsymbol{\alpha}}\times\set{\pm\id}=\set{\pm\boldsymbol{\alpha},\pm\id}$.

A simple computation shows that $\bar{\boldsymbol{\theta}}\in O(A_{\NS(S^{[2]})})$ is an involution.
\end{rem}

The following result gives us a criterium to easily find some cases where there is no strong ambiguity.
\begin{lemma}\label{lemma:esquareamb}
Let $S$ be a K3 surface with Picard number one, such that $\NS(S)=\Z h$ with $h$ ample and $h^2=2e$ for a fixed $e$. If either $e$ is a power of a prime, i.e.\ $p(e)=1$, or $e$ is a square, then $S^{[2]}$ is not strongly ambiguous.
\end{lemma}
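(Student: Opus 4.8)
The plan is to reduce, by Remark~\ref{rem:conclz=1}, to showing that whenever $a^2\equiv1\pmod{4e}$ and $b^2\equiv1\pmod{4e}$ with $b\neq\pm a$, there is no effective Hodge isometry $\Gamma_{J_a}\to\Gamma_{J_b}$, at least under the stated hypotheses on $e$. By Lemma~\ref{lemma:isocap4end}, Remark~\ref{rem:extensionsquares} and Remark~\ref{rem:hodgedevasto}, any Hodge isometry $\varphi:\Gamma_{J_a}\to\Gamma_{J_b}$ restricts on $\T$ to $\pm\id$ and hence, by Corollary~\ref{cor:esteniso}, is obtained by glueing $\pm\id_{\T}$ with some $\psi\in O(\NS(S^{[2]}))$ satisfying $(\pm\bar\id,\bar\psi)(J_a)=J_b$. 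So the whole question becomes a computation inside the discriminant group $A_{\T}\times A_{\NS(S^{[2]})}$, using the explicit description of $O(\NS(S^{[2]}))$ from Remark~\ref{rem:galsuhilbert}.

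First I would dispose of the case $e$ a square. Then $O(\NS(S^{[2]}))=\set{\pm\boldsymbol{\alpha},\pm\id}$, so the induced group on $A_{\NS(S^{[2]})}$ acts on the generator $\bigl(\tfrac{1}{2e}t,\tfrac{a}{2e}h,0\bigr)$ of $J_a$ only through $\pm\id$ on the $h$-component and $\pm\id$ on the $\xi$-component; combined with the $\pm\id_{\T}$ freedom this sends $J_a$ to $J_b$ only when $b\equiv\pm a\pmod{2e}$. Hence no strong ambiguity. For $e$ a power of a prime, $p(e)=1$, and by (the computation in) Proposition~\ref{prop:pen-ultimo} the set $\set{a\in\Z/2e\Z:a^2\equiv1\bmod 4e}$ has cardinality $2^{p(e)}=2$, so it equals $\set{\pm1}$; thus there is only one FM partner of $S$ up to isomorphism (namely $S$ itself, up to the $a\mapsto-a$ identification), and in particular no $b\neq\pm a$ exists to begin with, so $S^{[2]}$ cannot be strongly ambiguous — here one does not even need the action of $O(\NS(S^{[2]}))$.

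The one genuinely delicate point, and the step I expect to be the main obstacle, is the square case: one must be careful that when $e$ is a square the orthogonal group $O(\NS(S^{[2]}))$ really is just $\set{\pm\boldsymbol{\alpha},\pm\id}$ and contains no hidden isometry mixing $h$ and $\xi$ in a way that could move $a$ to a genuinely new residue. This is exactly what Remark~\ref{rem:galsuhilbert} records, but the reason is that $\Pell_e(1)$ has no nontrivial solution when $e$ is a square (a nontrivial solution would force $e=a^2$ with $a^2-e b^2=1$, impossible for $b\neq0$), which collapses the $\boldsymbol{\theta}$-part of the group; I would make this explicit so that the reader sees why the square case behaves like the prime-power case. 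Once these two observations are in place, Remark~\ref{rem:conclz=1} immediately yields that in both situations $S^{[2]}$ is not strongly ambiguous.
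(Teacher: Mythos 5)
Your proof is correct and follows essentially the same route as the paper: the square case via the collapse of $O(\NS(S^{[2]}))$ to $\set{\pm\boldsymbol{\alpha},\pm\id}$ (Remark~\ref{rem:galsuhilbert}) acting on the subgroups $J_a$, and the prime-power case via the count $2^{p(e)-1}=1$ of FM partners from Proposition~\ref{prop:pen-ultimo}. The only cosmetic difference is that the paper invokes Theorem~\ref{teor:isoFM} (Ploog) explicitly to conclude in the prime-power case, a step you use implicitly when passing from ``only one FM partner'' to ``not strongly ambiguous''.
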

\begin{proof}
Suppose that $e$ is a power of a prime. By Proposition~\ref{prop:pen-ultimo} $\#\FM(S)=2^0=1$, hence all the FM partners of $S$ are isomorphic. The claim follows by Theorem \ref{teor:isoFM}.

Suppose that $e$ is a square. By Remark~\ref{rem:galsuhilbert}, $O(\NS(S^{[2]}))=\set{\pm\boldsymbol{\alpha},\pm\id}$.
Hence $(\bar{\id},\pm\bar{\alpha})(J_a)=\Braket{\begin{pmatrix}
\frac{1}{2e}t,\pm\frac{a}{2e}h,0
\end{pmatrix}}=J_{\pm a}=(\bar{\id},\pm\bar{\id})(J_a)$. Then whenever ${S_{I_a}}^{[2]}\cong {S_{I_b}}^{[2]}$ we have $a=\pm b$, and so, by the proof of Proposition~\ref{prop:pen-ultimo}, $S_{I_a}\cong S_{I_b}$.
\end{proof}
We propose here three non trivial examples.
\begin{ex}[$e=6$]\label{ex:e=6}
Let $e=6$. The minimal positive solution of $\Pell_6(1)$ is $(U,V)=(5,2)$. With easy simplifications modulo 12 we know, by Remark~\ref{rem:galsuhilbert}, that
\[
O(\Z h\oplus\Z\xi)=\Braket{\boldsymbol{\theta},\boldsymbol{\alpha}:=\begin{pmatrix}1&0\\0&-1\end{pmatrix}}\times\set{\pm\id},\,\,\boldsymbol{\theta}:=\begin{pmatrix}5&2\\12&5\end{pmatrix},\,\,\bar{\boldsymbol{\theta}}:=\begin{pmatrix}5&2\\0&5\end{pmatrix}.
\]

It follows that
\[
(\bar{\id},\bar{\boldsymbol{\theta}}):J_a:=\Braket{\begin{pmatrix}
\frac{1}{12}t,\frac{a}{12}h,0
\end{pmatrix}}\mapsto J_{5a}:=\Braket{\begin{pmatrix}
\frac{1}{12}t,\frac{5a}{12}h,0
\end{pmatrix}},
\]
with $a^2\equiv1$ mod $24$. Note that $(5a)^2=25a^2\equiv a^2\equiv1$ mod $24$. Moreover $5a\not\equiv\pm a$ mod 24.

Hence the $2^{p(6)-1}=2$ isomorphism classes of FM partners of $S$ define Hilbert squares that could be isomorphic, indeed $(\bar{\id},\bar{\boldsymbol{\theta}})$ induces a Hodge isometry between their second cohomology groups, but we do not know yet if there is an effective one.
\end{ex}
\begin{ex}[$e=10$]\label{ex:e=10}
Let $e=10$. The minimal positive solution of $\Pell_{10}(1)$ is $(U,V)=(19,6)$. With easy simplifications modulo $20$ we know that
\[
O(\Z h\oplus\Z\xi)=\Braket{\boldsymbol{\theta},\boldsymbol{\alpha}:=\begin{pmatrix}1&0\\0&-1\end{pmatrix}}\times\set{\pm\id},\quad\bar{\boldsymbol{\theta}}:=\begin{pmatrix}-1&6\\0&-1\end{pmatrix}.
\]
It is easy to see that
\[
(\bar{\id},\bar{\boldsymbol{\theta}}):J_a:=\Braket{\begin{pmatrix}
\frac{1}{20}t,\frac{a}{20}h,0
\end{pmatrix}}\mapsto J_{-a}:=\Braket{\begin{pmatrix}
\frac{1}{20}t,\frac{-a}{20}h,0
\end{pmatrix}},
\]
with $a^2\equiv1$ mod $40$.
Hence the $2^{p(10)-1}=2$ isomorphism classes of FM partners define Hilbert squares that are not isomorphic. In particular there is not strong ambiguity.
\end{ex}
\begin{ex}[$e=15$]\label{ex:e=15}
Let $e=15$. The minimal positive solution of $\Pell_{15}(1)$ is $(U,V)=(4,1)$. With easy simplifications modulo $30$ we know that
\[
O(\Z h\oplus\Z\xi)=\Braket{\boldsymbol{\theta},\boldsymbol{\alpha}:=\begin{pmatrix}1&0\\0&-1\end{pmatrix}}\times\set{\pm\id},\quad\bar{\boldsymbol{\theta}}:=\begin{pmatrix}4&1\\15&4\end{pmatrix}.
\]
It is easy to see that $J_a$ maps to either $J_{\pm a}$ or an isotropic subgroup with $z=1$. Hence there is not strong ambiguity (see Remark \ref{rem:conclz=1}).
\end{ex}
In Proposition \ref{prop:finalpiccolo} we give a general result on glueing Hodge isometries on $\T$ and $\NS(S^{[2]})$. We need the following technical lemma.
\begin{lemma}\label{lemma:fortunatooo}
Let $(U,V)$ be the (positive) minimal solution of $\Pell_e(1)$. If $V$ is even then $U\not\equiv1$ mod $2e$. Moreover, $V$ is even and $U\equiv-1$ mod $2e$ if and only if $\Pell_e(-1)$ is solvable.
\end{lemma}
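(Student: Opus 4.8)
The statement has two halves, and I would handle them in order. For the first half, suppose $V$ is even; I want to show $U \not\equiv 1 \pmod{2e}$. The relation $U^2 - eV^2 = 1$ gives $U^2 \equiv 1 \pmod{e}$ when $V$ is even we actually get more: $U^2 - 1 = eV^2$, so $U^2 \equiv 1 \pmod{4e}$ since $V^2 \equiv 0 \pmod 4$. Now if additionally $U \equiv 1 \pmod{2e}$, write $U = 1 + 2ek$. Then $U^2 - 1 = 4ek + 4e^2k^2 = eV^2$, so $V^2 = 4k + 4ek^2 = 4k(1+ek)$, forcing $V$ even (consistent) and $k(1+ek)$ a perfect square; more usefully, $U = 1 + 2ek \geq 1$, and since $(U,V)$ is the \emph{minimal} positive solution with $U > 1$ (as $(1,0)$ is excluded by positivity), I would argue $k \geq 1$, so $U \geq 1 + 2e$. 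The plan is then to contradict minimality: from $U \equiv 1 \pmod{2e}$ one can factor $U - 1$ and $U + 1$ and extract a smaller solution, or more cleanly, observe that $\varepsilon = U + V\sqrt e$ and that $U \equiv 1 \pmod{2e}$ together with $V$ even would mean $\varepsilon \equiv 1$ in the relevant sense, contradicting that $\varepsilon$ generates the solution group by Lemma~\ref{rem:pellcolmeno} unless $\varepsilon$ is itself a proper power. I expect the cleanest route is: if $\Pell_e(-1)$ is solvable with minimal solution $(U',V')$ and $\eta = U' + V'\sqrt e$, then $\varepsilon = \eta^2$ by Lemma~\ref{rem:pellcolmeno}, so $U = U'^2 + eV'^2$ and $V = 2U'V'$ (hence $V$ even automatically), and $U = U'^2 + eV'^2 = 2U'^2 + 1 \equiv 1 \pmod{2e}$ would force $2U'^2 \equiv 0 \pmod{2e}$, i.e. $U'^2 \equiv 0 \pmod e$; combined with $U'^2 = eV'^2 - 1 \equiv -1 \pmod e$ this gives $-1 \equiv 0 \pmod e$, so $e = 1$, a trivial case to dispose of separately.

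For the second half I would prove both implications. If $\Pell_e(-1)$ is solvable with minimal solution $(U',V')$, then as just noted $\varepsilon = \eta^2$ where $\eta = U'+V'\sqrt e$, so $U = U'^2 + eV'^2$ and $V = 2U'V'$; thus $V$ is even, and $U = U'^2 + eV'^2 = 2eV'^2 - 1 \equiv -1 \pmod{2e}$ (using $U'^2 = eV'^2 - 1$), which is exactly the desired conclusion. Conversely, suppose $V$ is even and $U \equiv -1 \pmod{2e}$. Write $U = -1 + 2e\ell$ for some $\ell \geq 1$. Then $U^2 - 1 = (U-1)(U+1) = (2e\ell - 2)(2e\ell) = 4e\ell(e\ell - 1) = eV^2$, so $V^2 = 4\ell(e\ell - 1)$. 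The plan is to use this factorization to produce a solution of $\Pell_e(-1)$: set $U' = e\ell - U_0$ and $V' = V_0$ for suitable $U_0, V_0$ obtained from the two coprime-ish factors $U-1 = 2(e\ell-1)$ and $U+1 = 2e\ell$, exploiting that $\gcd(e\ell-1, e\ell) = 1$ to split the square $V^2/4 = \ell(e\ell-1)$ into $\ell = V_0^2 d_1$, $e\ell - 1 = V_0'^2 d_2$ with $d_1 d_2 = e$ — and then checking the combination lands on $\Pell_e(-1)$.

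The main obstacle I anticipate is the converse direction: extracting an honest solution of $\Pell_e(-1)$ from the congruence condition without circular reasoning. The slick way is to argue in the unit group: $\varepsilon = U + V\sqrt e$ with $V$ even and $U \equiv -1 \pmod{2e}$ means $\varepsilon \equiv -1 \pmod{2e\sqrt e}$ in $\Z[\sqrt e]$, roughly, and one wants to conclude $\varepsilon$ is a square in the group of units of norm $\pm 1$; this is where Lemma~\ref{rem:pellcolmeno} does the work, since the solutions of $\Pell_e(1)$ are exactly $\pm \varepsilon^n$ and those of $\Pell_e(-1)$ (when solvable) are the "odd half-powers" $\pm \eta^{2m+1}$, so solvability of $\Pell_e(-1)$ is equivalent to $\varepsilon$ being a square in that group — and the congruence $U \equiv -1$, $V$ even is precisely what detects squareness. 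I would phrase the converse as: if $\Pell_e(-1)$ were \emph{not} solvable, then $\varepsilon$ is not a square in the unit group, and a direct check on $\varepsilon = U + V\sqrt e$ shows that in that case either $V$ is odd or $U \equiv 1 \pmod{2e}$ (never $U \equiv -1 \pmod{2e}$ with $V$ even), contradicting the hypothesis. The bookkeeping of which residue $U$ falls into, as a function of whether $\varepsilon$ is a square, is the only genuinely delicate point; everything else is elementary manipulation of the Pell relation modulo $2e$ and $4e$.
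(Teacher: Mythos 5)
Your forward implication ($\Pell_e(-1)$ solvable $\Rightarrow$ $V$ even and $U\equiv-1$ mod $2e$, via $\varepsilon=\eta^2$) is correct and coincides with the paper's argument, but the other two implications --- which carry the real content --- are not actually proved. For the first half, the route you commit to (``the cleanest route'') assumes $\Pell_e(-1)$ is solvable, and that does not follow from $V$ being even: for $e=6$ the minimal solution is $(U,V)=(5,2)$ with $V$ even, yet $x^2-6y^2=-1$ has no solutions ($-1$ is not a square mod $3$). So the essential case ($V$ even, $\Pell_e(-1)$ not solvable) is left at the level of a ``plan.'' You do reach the key identity $V^2=4k(1+ek)$, i.e.\ $m^2=k(1+ek)$ with $V=2m$, but the step that closes the argument is missing: since $\gcd(k,1+ek)=1$, both factors are squares, $k=t^2$ and $1+ek=s^2$, which gives a positive solution $(s,t)$ of $\Pell_e(1)$ with $s<U$, contradicting minimality. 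This elementary descent is exactly the paper's proof.

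For the converse of the second half the same issue recurs in a sharper form. Your factorization $V^2=4\ell(e\ell-1)$ is right, but the proposed splitting ``$\ell=V_0^2d_1$, $e\ell-1=V_0'^2d_2$ with $d_1d_2=e$'' is wrong: the factor $e$ has already cancelled, and since $\gcd(\ell,e\ell-1)=1$ and $\ell(e\ell-1)=(V/2)^2$, each factor is itself a perfect square, $\ell=t^2$, $e\ell-1=s^2$, whence $s^2-et^2=-1$ directly --- again the paper's argument, which you stop short of. The fallback ``unit-group'' argument is circular: the asserted ``direct check'' that a non-square fundamental unit never satisfies ($V$ even and $U\equiv-1$ mod $2e$) \emph{is} the statement to be proved, and the dichotomy you state for it (``either $V$ is odd or $U\equiv1$ mod $2e$'') is false as written --- $e=6$ again gives $V$ even and $U=5\not\equiv\pm1$ mod $12$; only the parenthetical non-occurrence is the correct claim. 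In short: same intended strategy as the paper (descent via the coprime factorization coming from $U\equiv\pm1$ mod $2e$), but the decisive coprimality-and-squares step is never carried out, and the substitute arguments either do not cover all cases or presuppose the lemma.
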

\begin{proof}
Suppose that $V$ is even and $U\equiv1$ mod $2e$. Then $V=2m$ and $U=1+2ek$ for suitable $m,k\in\N_{>0}$. Hence:
\[
1=U^2-eV^2=(1+2ek)^2-4em^2=1+4e^2k^2+4ek-4em^2,
\]
which is equivalent to $m^2=k(1+ek)$, thus $k=t^2$ and $1+ek=s^2$ for suitable $t,s\in\N_{>0}$. Then $s^2-et^2=1$, in particular $(U,V)$ is not minimal: a contradiction.

Suppose now that $\Pell_e(-1)$ admits a minimal solution $(s,t)$, then $U=s^2+et^2$ and $V=2st$ is even (see Lemma \ref{rem:pellcolmeno}). Moreover $s^2=-1+et^2$, hence
\[
U=s^2+et^2=-1+2et^2\equiv-1\quad\text{mod $2e$}.
\]

Conversely, suppose that $V$ is even and $U\equiv-1$ mod $2e$. Then $V=2m$ and $U=-1+2ek$ for suitable $m,k\in\N_{>0}$, in particular:
\[
1=U^2-eV^2=(-1+2ek)^2-4em^2=1+4e^2k^2-4ek-4em^2.
\]
This is equivalent to $m^2=k(ek-1)$, in particular $k=t^2$ and $-1+ek=s^2$ for suitable $s,t\in\N_{>0}$. Note that $s^2-et^2=-1+ek-ek=-1$, hence $\Pell_e(-1)$ is solvable.
The claim follows.
\end{proof}
The following result gives necessary and sufficient conditions to have non trivial Hodge isometries between the second cohomology groups of Hilbert squares as in Lemma~\ref{lemma:stragrande}.
\begin{prop}\label{prop:finalpiccolo}
The following are equivalent:
\begin{enumerate}
\item the equation $\Pell_e(1)$ has a positive minimal solution $(U,V)$ with $V$ even and $U\not\equiv-1$ mod $2e$;\label{cond:prop}
\item there exists a (glued) Hodge isometry $\psi:H^2({S_{I_a}}^{[2]},\Z)\rightarrow H^2({S_{I_b}}^{[2]},\Z)$ such that $S_{I_a}\not\cong S_{I_b}$ (i.e.\ $a\ne\pm b$), where $a,b\in\{\alpha\in\Z/2e\Z:\alpha^2\equiv1\text{ mod }4e\}\subset(\Z/2e\Z)^*$.
\end{enumerate}
\end{prop}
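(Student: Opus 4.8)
The plan is to translate condition~(2) into a statement about the discriminant group $A_{\NS(S^{[2]})}=A_{\Z h}\times A_{\langle-2\rangle}$ by means of Corollary~\ref{cor:esteniso}, and then to decide it using the explicit description of $O(\NS(S^{[2]}))$ from Remark~\ref{rem:galsuhilbert}. The first observation is that every Hodge isometry $\psi\colon H^2({S_{I_a}}^{[2]},\Z)\to H^2({S_{I_b}}^{[2]},\Z)$ is automatically glued. Indeed, after identifying both sides with $\Gamma_{J_a}$ and $\Gamma_{J_b}$ via Lemma~\ref{lemma:stragrande}, $\psi$ maps Néron--Severi group to Néron--Severi group, hence (cf.\ the proof of Theorem~\ref{teor:isoFM}) it maps $\T=\NS(S^{[2]})^\perp$ to $\T$; by Remark~\ref{rem:hodgedevasto} we have $\psi|_\T\in O_{\text{Hodge}}(\T)=\{\pm\id\}$, while $\psi|_{\NS(S^{[2]})}=\varphi$ for some $\varphi\in O(\NS(S^{[2]}))$. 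Conversely, any isometry glued from $\pm\id_\T$ and some $\varphi\in O(\NS(S^{[2]}))$ fixes $\Gamma_{J_a}^{2,0}=\T^{2,0}$, hence is a Hodge isometry by Remark~\ref{rem:extensionsquares}. Thus, by Corollary~\ref{cor:esteniso}, condition~(2) is equivalent to the existence of $a\ne\pm b$ in $\{\alpha:\alpha^2\equiv1\bmod 4e\}$ and of $\varphi\in O(\NS(S^{[2]}))$ with $(\pm\bar\id_\T,\bar\varphi)(J_a)=J_b$.

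Next I would unravel this last condition. The subgroup $J_a$ is cyclic of order $2e$ generated by $(\frac{1}{2e}t,\frac{a}{2e}h,0)$, and the $A_\T$-component of the image of this generator under $(\pm\bar\id_\T,\bar\varphi)$ is $\pm\frac{1}{2e}t$; comparing with a generator of $J_b$, one sees that $(\pm\bar\id_\T,\bar\varphi)(J_a)=J_b$ holds if and only if $\bar\varphi(\frac{a}{2e}h,0)=(\pm\frac{b}{2e}h,0)$ in $A_{\Z h}\times A_{\langle-2\rangle}$. Now by Remark~\ref{rem:galsuhilbert} the generator $\boldsymbol\alpha$ acts trivially on $A_{\NS(S^{[2]})}$ (since $-1\equiv1$ in $A_{\langle-2\rangle}$), so the image of $O(\NS(S^{[2]}))$ in $O(A_{\NS(S^{[2]})})$ is $\{\pm\bar\id\}$ when $e$ is a perfect square, and is generated by $\overline{-\id}$ together with the involution $\bar{\boldsymbol\theta}$ when $e$ is not a square. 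A direct computation, distinguishing the parity of $V$ and using that $a$ is odd (being a unit modulo $2e$), gives $\bar{\boldsymbol\theta}(\frac{a}{2e}h,0)=(\frac{aU}{2e}h,0)$ if $V$ is even, and $\bar{\boldsymbol\theta}(\frac{a}{2e}h,0)=(\frac{aU}{2e}h,\frac{1}{2}\xi)$ if $V$ is odd.

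Putting these facts together concludes the proof. If $e$ is a perfect square, then $\Pell_e(1)$ has no positive solution, so~(1) fails; moreover every available $\bar\varphi$ sends $(\frac{a}{2e}h,0)$ to $(\pm\frac{a}{2e}h,0)$, forcing $b\equiv\pm a$, so~(2) fails too. If $e$ is not a square but $V$ is odd, the $\frac{1}{2}\xi$-term in $\bar{\boldsymbol\theta}(\frac{a}{2e}h,0)$ survives, so the only $\bar\varphi$ landing inside $A_{\Z h}\times\{0\}$ on $(\frac{a}{2e}h,0)$ are $\pm\bar\id$, forcing $b\equiv\pm a$ and~(2) fails; while~(1) fails as it requires $V$ even. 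Finally, if $e$ is not a square and $V$ is even, then $\bar{\boldsymbol\theta}(\frac{a}{2e}h,0)=(\frac{aU}{2e}h,0)$; since $U^2=1+eV^2\equiv1\bmod 4e$ for $V$ even, the integer $b:=aU\bmod 2e$ lies in $\{\alpha:\alpha^2\equiv1\bmod 4e\}$, so $(\bar\id_\T,\bar{\boldsymbol\theta})(J_a)=J_b$ realizes~(2) precisely when $aU\not\equiv\pm a$, i.e.\ $U\not\equiv\pm1\bmod 2e$; conversely, any witness of~(2) forces $\bar\varphi\in\{\pm\bar{\boldsymbol\theta}\}$ and hence $U\not\equiv\pm1\bmod 2e$. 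By Lemma~\ref{lemma:fortunatooo}, for $V$ even one automatically has $U\not\equiv1\bmod 2e$, so ``$U\not\equiv\pm1$'' is equivalent to ``$U\not\equiv-1$'', which is exactly condition~(1).

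The step I expect to be the main obstacle is the explicit computation of $\bar{\boldsymbol\theta}$ on $A_{\NS(S^{[2]})}$, in particular keeping track of whether the $\frac{1}{2}\xi$-component survives (which depends on the parity of $V$), together with the bookkeeping needed to pass between the conditions modulo $4e$ (the isotropy/square conditions $a^2,b^2\equiv1$) and modulo $2e$ (the inequality $a\ne\pm b$); this is exactly where Lemma~\ref{lemma:fortunatooo} enters, and it is what forces condition~(1) to appear in its asymmetric form.
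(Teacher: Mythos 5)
Your proposal is correct and follows essentially the same route as the paper's proof: glue $\pm\id_{\T}$ with elements of $O(\NS(S^{[2]}))$ via Corollary~\ref{cor:esteniso}, use the description of $O(\NS(S^{[2]}))$ in Remark~\ref{rem:galsuhilbert} together with the action of $\bar{\boldsymbol{\theta}}$ on $J_a$, and invoke Lemma~\ref{lemma:fortunatooo} to pass between $U\not\equiv\pm1$ and $U\not\equiv-1$ mod $2e$. Your write-up is in fact more explicit than the paper's (notably in the converse direction, the parity analysis of $V$, and the check that $b:=aU$ satisfies $b^2\equiv1$ mod $4e$), but the underlying argument is the same.
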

\begin{proof}
Let $(U,V)$ be the positive minimal solution of $\Pell_e(1)$, and let $\boldsymbol{\theta}:=\begin{pmatrix}U&V\\eV&U\end{pmatrix}$. We get an isomorphism
\[
(\bar{\id},\bar{\boldsymbol{\theta}}):J_a:=\Braket{\begin{pmatrix}
\frac{1}{2e}t,\frac{a}{2e}h,0
\end{pmatrix}}\mapsto J_{aU}:=\Braket{\begin{pmatrix}
\frac{1}{2e}t,\frac{aU}{2e}h,0
\end{pmatrix}},
\]
which lifts to a Hodge isometry $\psi$ as in the statement by Corollary~\ref{cor:esteniso} and Remark \ref{rem:extensionsquares}. Moreover $a\not\equiv\pm b:=\pm aU$ mod $4e$ by Lemma \ref{lemma:fortunatooo}.

Conversely, let $\psi:H^2({S_{I_a}}^{[2]},\Z)\rightarrow H^2({S_{I_b}}^{[2]},\Z)$ be a Hodge isometry such that $S_{I_a}\not\cong S_{I_b}$ (i.e.\ $a\ne\pm b$). Then there exists an isometry $\boldsymbol{\theta}\in O(\NS(S^{[2]}))$, $\boldsymbol{\theta}\ne\pm\id$. Hence (see also Remark~\ref{rem:galsuhilbert}) there exists a (positive) minimal solution $(U,V)$ of $\Pell_e(1)$ with $V$ even and $U\not\equiv\pm1$ mod $2e$. This proves the equivalence in the statement.
\end{proof}
\begin{lemma}\label{lemma:faticoso}
Suppose that $e$ is not a square and let $(U,V)$ be the (positive) minimal solution of $\Pell_e(1)$. If $e$ is a power of a prime then either $V$ is odd or $\Pell_e(-1)$ is solvable.
\end{lemma}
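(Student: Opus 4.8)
The plan is to reduce the statement to the two lemmas that precede it. Since $e$ is not a square, $\Pell_e(1)$ does have a minimal positive solution $(U,V)$; I assume $V$ is even (otherwise the conclusion holds trivially) and aim to show $\Pell_e(-1)$ is solvable. By Lemma~\ref{lemma:fortunatooo} it suffices to prove $U\equiv-1\pmod{2e}$, and that same lemma already guarantees $U\not\equiv1\pmod{2e}$ once $V$ is even; so everything comes down to showing that, for $e$ a prime power and $V$ even, $U$ must be $\pm1$ modulo $2e$.

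For this I would start from $U^2-1=eV^2$. Writing $V=2m$ gives $U^2-1=4em^2$, whence $U$ is odd and $4e\mid(U-1)(U+1)$, with $U-1$ and $U+1$ consecutive even integers satisfying $\gcd(U-1,U+1)=2$. Now write $e=p^k$ and split on $p$. If $p$ is odd, $p^k$ is coprime to $2=\gcd(U-1,U+1)$, so $p^k$ divides exactly one of $U-1,U+1$; that factor is also even, and since $\gcd(2,p^k)=1$ it is in fact divisible by $2p^k=2e$, forcing $U\equiv\pm1\pmod{2e}$. If $p=2$, then $v_2\big((U-1)(U+1)\big)\ge k+2$ (as $4e=2^{k+2}$), while one of the two consecutive even numbers $U-1,U+1$ has $2$-adic valuation exactly $1$; hence the other is divisible by $2^{k+1}=2e$, again giving $U\equiv\pm1\pmod{2e}$. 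Together with $U\not\equiv1\pmod{2e}$ this yields $U\equiv-1\pmod{2e}$, and Lemma~\ref{lemma:fortunatooo} then delivers a solution of $\Pell_e(-1)$.

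The step I expect to need the most care is precisely the passage from a congruence modulo $4e$ to one modulo $2e$. One must not over-claim $U\equiv\pm1\pmod{4e}$: when $8\mid4e$ there are four square roots of $1$ modulo $4e$, and only the weaker congruence modulo $2e$ is guaranteed — which happens to be exactly what Lemma~\ref{lemma:fortunatooo} consumes. This is also the only place where the prime-power hypothesis really enters: for $e$ with two distinct prime factors, $e$ can split as a product $e_1e_2$ across $U-1$ and $U+1$, creating the extra square roots of $1$ mod $4e$ that are responsible for strong ambiguity elsewhere in the paper.

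Alternatively, one can bypass Lemma~\ref{lemma:fortunatooo} and argue directly with the factorization $(U-1)(U+1)=4em^2$: setting $a=(U-1)/2$ and $b=(U+1)/2$ (coprime, with $ab=em^2$), the prime-power hypothesis forces $\{a,b\}=\{es^2,\,t^2\}$; the case $a=es^2,\ b=t^2$ gives $t^2-es^2=1$ with $t<U$, contradicting minimality of $(U,V)$, so $a=s^2,\ b=et^2$ and hence $s^2-et^2=-1$.
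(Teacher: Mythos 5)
Your proof is correct, and it takes a genuinely different route from the paper's in one essential place. Both arguments share the same skeleton in the odd case: reduce to showing $U\equiv\pm1\pmod{2e}$ when $V$ is even, then invoke Lemma~\ref{lemma:fortunatooo} twice (to exclude $U\equiv1$ and to convert $U\equiv-1$ into solvability of $\Pell_e(-1)$). For $e=p^k$ with $p$ odd, the paper gets $U\equiv\pm1\pmod{2p^k}$ by counting the $2$-torsion of $(\Z/2p^k\Z)^*$, whereas you derive the same congruence by hand from $(U-1)(U+1)=4em^2$ and $\gcd(U-1,U+1)=2$; this is essentially the elementary proof of that counting fact, so the difference there is cosmetic. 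The real divergence is at $p=2$: the paper treats $e=2$ separately and then runs an induction on the exponent, using the correspondence between $\Pell_{2^{2(k+1)+1}}(1)$ and $\Pell_{2^{2k+1}}(1)$ together with Lemma~\ref{rem:pellcolmeno}, to show $U,V$ are odd for $e=2^{2k+1}$, $k\ge1$. Your $2$-adic valuation argument (one of $U\pm1$ has valuation exactly $1$, so the other is divisible by $2^{k+1}=2e$) handles all powers of $2$ uniformly with the odd-prime case, needs no induction and no special case $e=2$; combined with Lemma~\ref{lemma:fortunatooo} it even shows that an even $V$ would force $\Pell_{2^k}(-1)$ to be solvable, which for $k\ge3$ is impossible modulo $8$, thereby recovering the paper's ``$V$ odd'' conclusion as a corollary rather than as the goal of an induction. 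Your caution about only claiming the congruence modulo $2e$ (not $4e$) is exactly right and is what Lemma~\ref{lemma:fortunatooo} requires. The closing descent argument via $a=(U-1)/2$, $b=(U+1)/2$, $ab=em^2$ is also a valid self-contained alternative that bypasses Lemma~\ref{lemma:fortunatooo} entirely; what the paper's formulation buys instead is that the square-root-of-unity counting it uses is the same computation that appears in Proposition~\ref{prop:pen-ultimo}, keeping the two proofs parallel.
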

\begin{proof}
Let $e=p^k$ for suitable $k\in\N$ odd and $p\ne2$ prime. Suppose that $V$ is even, then
\[
U^2=1+p^kV^2\equiv1\quad\text{mod $2p^k$}.
\]
It is known that
\[
\#\Big(\big(\frac{\Z}{2p^k\Z}\big)^*\Big)_2=\#\Big(\big(\frac{\Z}{2\Z}\big)^*\Big)_2\cdot\#\Big(\big(\frac{\Z}{p^k\Z}\big)^*\Big)_2=1\cdot2.
\]
It follows that $U\equiv\pm1$ mod $2p^k$. By Lemma~\ref{lemma:fortunatooo}, if $U\equiv1$ then $V$ is odd, a contradiction, hence $U\equiv-1$, so $\Pell_e(-1)$ is solvable.

Let $e=2^{2k+1}$ for a suitable $k\in\N$. If $k=0$ (i.e.\ $e=2$) the positive minimal solution of $\Pell_2(1)$ is $(U,V)=(3,2)$ with $V$ even and $\Pell_e(-1)$ solvable. We prove by induction that if $k>0$ then $U,V$ are odd. First of all, if $k=1$ (i.e.\ $e=8$) then $(U,V)=(3,1)$. Suppose now that $k>0$ and, by inductive hypothesis, that the positive minimal solution $(\tilde{U},\tilde{V})$ of $\Pell_{2^{2k+1}}(1)$ has $\tilde{U},\tilde{V}$ odd. Let $(U,V)$ be the minimal positive solution of $\Pell_{2^{2(k+1)+1}}(1)$. Notice that
\[
\Pell_{2^{2(k+1)+1}}(1):x^2-2^{2(k+1)+1}y^2=x^2-2^{2k+1}(2y)^2=1,
\]
hence by Lemma \ref{rem:pellcolmeno} $(U,V)$ is obtained by $(\tilde{U}+\tilde{V}\sqrt{2^{2k+1}})^2=\tilde{U}^2+2^{2k+1}\tilde{V}^2+2\tilde{U}\tilde{V}\sqrt{2^{2k+1}}$, i.e.\ $(U,V)=(\tilde{U}^2+2^{2k+1}\tilde{V}^2,\tilde{U}\tilde{V})$. By hypothesis $\tilde{U}$ and $\tilde{V}$ are odd, hence also $U=\tilde{U}^2+2^{2k+1}\tilde{V}^2$ and $V=\tilde{U}\tilde{V}$ are odd.
\end{proof}
\begin{rem}
A necessary condition to the equivalent statements in Proposition~\ref{prop:finalpiccolo} is, by Lemma \ref{lemma:faticoso}, that $e$ is not a square or a power of a prime. In fact, this condition is in Lemma \ref{lemma:esquareamb}.
\end{rem}
\section{Strong ambiguity}\label{sec:strongambiguity}
As before, let $S$ be a K3 surface such that $\NS(S)=\Z h$ with $h$ ample and $h^2=2e$.
Let $\T=\T_S$ be the transcendental lattice of $S$ and let $t\in \T$ such that $A_{\T}=\Braket{\frac{1}{2e}t}$ and $t^2=-h^2$.

In this section we present some results on the effectiveness of the glued Hodge isometries obtained in Section~\ref{sec:speranza}, in order to see when $S^{[2]}$ is strongly ambiguous.
\begin{rem}
Suppose that $\psi:H^2({S_{I_a}}^{[2]},\Z)\rightarrow H^2({S_{I_b}}^{[2]},\Z)$ is a Hodge isometry, where $S_{I_a},S_{I_b}\in\text{FM}(S)$, as in the proof of Proposition \ref{prop:pen-ultimo}, are such that $a\ne\pm b$, and let $C_a,C_b$ be the ample cones (i.e.\ the cones in $\NS({S_{I_a}}^{[2]})\otimes\R$ and $\NS({S_{I_b}}^{[2]})\otimes\R$ generated by the ample divisors) of ${S_{I_a}}^{[2]},{S_{I_b}}^{[2]}$ respectively. It follows from the Generalized Torelli Theorem that there exists an isomorphism $f:{S_{I_b}}^{[2]}\rightarrow {S_{I_a}}^{[2]}$ such that $f^*=\psi$ if and only if $\psi(C_a)=C_b$ (i.e.\ $\psi$ preserves the ample cones).
\end{rem}
We know (see Theorem~\ref{teor:bayermacri}) that the movable cone and the ample cone of $S^{[2]}$ depend on the resolvability of $\Pell_{4e}(5)$.
\begin{lemma}[see also {\cite[Proposition 4.3]{bcns}}]\label{lemma:tecnicmov}
With the notation as in Lemma \ref{rem:galsuhilbert}, the only non trivial isometry in $O(\NS(S^{[2]}))$ that maps the movable cone of $S^{[2]}$ in itself is the involution
\[
\boldsymbol{\beta}:=\boldsymbol{\alpha\theta}=\begin{pmatrix}
1&0\\0&-1
\end{pmatrix}
\begin{pmatrix}
U&V\\eV&U
\end{pmatrix}=
\begin{pmatrix}
U&V\\-eV&-U
\end{pmatrix},
\]
which exists only if $e$ is not a square.
\end{lemma}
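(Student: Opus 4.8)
The plan is to analyze the action of $O(\NS(S^{[2]}))$ on the movable cone $\text{Mov}(S^{[2]})$ directly, using the explicit generators from Remark~\ref{rem:galsuhilbert} and the description of the boundary walls from Theorem~\ref{teor:bayermacri}. Since we may assume $e$ is not a square (otherwise $O(\NS(S^{[2]}))=\set{\pm\boldsymbol{\alpha},\pm\id}$, and one checks directly that neither $\boldsymbol{\alpha}$ nor $-\boldsymbol{\alpha}$ fixes $\text{Mov}(S^{[2]})$, since $\boldsymbol{\alpha}$ sends $h$ to $h$ but $h-\mu_e\xi$ to $h+\mu_e\xi$, which is not in the cone; this already forces the statement to be vacuously about the non-square case), we have $O(\NS(S^{[2]}))=\Braket{\boldsymbol{\theta},\boldsymbol{\alpha}}\times\set{\pm\id}$. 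So every isometry is $\pm\boldsymbol{\theta}^k$ or $\pm\boldsymbol{\alpha}\boldsymbol{\theta}^k$ for $k\in\Z$.

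First I would recall that $\text{Mov}(S^{[2]})$ is the convex cone in $\NS(S^{[2]})\otimes\R$ spanned by $h$ and $h-\mu_e\xi$, with $\mu_e=e b_1/a_1>0$ where $(a_1,b_1)$ is the minimal solution of $\Pell_e(1)$; note $(a_1,b_1)=(U,V)$. A self-isometry of $\NS(S^{[2]})$ preserving this cone must permute its two extremal rays. The class $h$ is characterized inside the cone (and inside the positive cone) as the unique primitive isotropic-free boundary ray with $h^2=2e>0$; more usefully, one checks that $h$ spans the unique ray on the boundary of $\text{Mov}(S^{[2]})$ that is also on the boundary of $\text{Nef}(S^{[2]})$, or alternatively that $h-\mu_e\xi$ is the wall where a flop/birational contraction occurs. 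The cleanest route: compute $\boldsymbol{\theta}(h)=Uh+eV\xi$ and $\boldsymbol{\theta}(\xi)=Vh+U\xi$, and check whether the image of the cone $\langle h, h-\mu_e\xi\rangle$ again has $h$ as an extremal ray; since $\boldsymbol{\theta}(h)=Uh+eV\xi$ is not proportional to $h$ (as $V\neq 0$), $\boldsymbol{\theta}$ itself does not preserve $\text{Mov}(S^{[2]})$, and similarly no $\boldsymbol{\theta}^k$ with $k\neq0$ does, because the orbit of the ray $\R_{>0}h$ under $\boldsymbol\theta$ consists of distinct rays (this follows from $\boldsymbol\theta$ having infinite order, or from a direct eigenvector computation: the eigenvectors of $\boldsymbol\theta$ are $h\pm\sqrt{e}\,\xi$, which are irrational, hence never equal to $\R h$).

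Then I would turn to $\boldsymbol{\beta}:=\boldsymbol{\alpha}\boldsymbol{\theta}$ and verify it does preserve the movable cone. Compute $\boldsymbol{\beta}(h)=Uh-eV\xi$ and $\boldsymbol{\beta}(\xi)=Vh-U\xi$. One must check that $\boldsymbol{\beta}$ swaps the two rays $\R_{>0}h$ and $\R_{>0}(h-\mu_e\xi)$: using $\mu_e = eV/U$, we have $h-\mu_e\xi=h-\tfrac{eV}{U}\xi$, i.e. the ray of $Uh-eV\xi$; but that is exactly $\boldsymbol{\beta}(h)$. And $\boldsymbol{\beta}(h-\mu_e\xi)=\boldsymbol{\beta}(h)-\mu_e\boldsymbol{\beta}(\xi)=(Uh-eV\xi)-\tfrac{eV}{U}(Vh-U\xi)=(U-\tfrac{eV^2}{U})h=\tfrac{U^2-eV^2}{U}h=\tfrac{1}{U}h$, which spans $\R_{>0}h$. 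So $\boldsymbol{\beta}$ interchanges the two extremal rays and hence maps the convex cone they span to itself. A short check that $\boldsymbol\beta$ is an involution: $\boldsymbol\beta^2 = \boldsymbol{\alpha\theta\alpha\theta}$, and since $\boldsymbol\alpha\boldsymbol\theta\boldsymbol\alpha = \boldsymbol\theta^{-1}$ (direct $2\times2$ computation, or: conjugating the matrix $\begin{pmatrix}U&V\\eV&U\end{pmatrix}$ by $\text{diag}(1,-1)$ inverts the sign of the off-diagonal, giving $\begin{pmatrix}U&-V\\-eV&U\end{pmatrix}=\boldsymbol\theta^{-1}$ as both have determinant $1$), we get $\boldsymbol\beta^2=\boldsymbol\theta^{-1}\boldsymbol\theta=\id$. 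Finally, to rule out the remaining elements $-\id$, $-\boldsymbol\beta$, and $\pm\boldsymbol\theta^k\boldsymbol\beta$ for other $k$: $-\id$ sends $h$ to $-h\notin$ the cone; $-\boldsymbol\beta$ sends $h$ to $-Uh+eV\xi$, which lies in the opposite cone; and any $\pm\boldsymbol\alpha\boldsymbol\theta^k$ with $k\neq 1$ either fails to be an involution or moves the ray $\R_{>0}h$ off the cone by the same infinite-order/irrational-eigenvector argument as before (one can reduce to $k=1$ by noting $\boldsymbol\alpha\boldsymbol\theta^k = (\boldsymbol\alpha\boldsymbol\theta)\boldsymbol\theta^{k-1}=\boldsymbol\beta\boldsymbol\theta^{k-1}$ and then checking $\boldsymbol\beta\boldsymbol\theta^{k-1}$ preserves the cone only if $\boldsymbol\theta^{k-1}$ does, forcing $k=1$).

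The main obstacle I anticipate is bookkeeping the two regimes of Theorem~\ref{teor:bayermacri}: when $\Pell_{4e}(5)$ is solvable, the movable cone is strictly larger than the ample cone, with $\mu_e = eV/U$ still given by the $\Pell_e(1)$ solution (so the computation above is unchanged), but one should double-check that $\mu_e$ really equals $eb_1/a_1$ in all stated cases rather than the $\nu_e$ slope; reading Theorem~\ref{teor:bayermacri} confirms $\mu_e$ is always $eb_1/a_1$ (when $e$ is not a perfect square). If $\Pell_{4e}(5)$ has no solution and $e$ is not a perfect square the same formula holds, so the argument is uniform in the non-square case, and the square case is handled by the direct check that $O(\NS(S^{[2]}))$ contains no cone-preserving non-trivial isometry, consistent with "$\boldsymbol\beta$ exists only if $e$ is not a square". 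No deep input beyond Remark~\ref{rem:galsuhilbert}, Theorem~\ref{teor:bayermacri}, and elementary $2\times 2$ linear algebra is needed.
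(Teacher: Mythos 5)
Your proposal follows essentially the same route as the paper: work with the generators $\boldsymbol\theta,\boldsymbol\alpha,-\id$ from Remark~\ref{rem:galsuhilbert}, use the wall description $h\R_{>0}$, $(h-\mu_e\xi)\R_{>0}$ with $\mu_e=eV/U$ from Theorem~\ref{teor:bayermacri}, check by direct computation that $\boldsymbol\beta=\boldsymbol{\alpha\theta}$ swaps the two walls (your computation $\boldsymbol\beta(h)=U(h-\mu_e\xi)$, $\boldsymbol\beta(h-\mu_e\xi)=\tfrac1U h$ is exactly the paper's ``simple computation''), rule out $\pm\boldsymbol\theta^k$ for $k\neq0$, and reduce $\pm\boldsymbol{\alpha\theta}^k$ to that case; the square case and the negatives are handled as you do.

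The one step that is stated too quickly is the exclusion of $\boldsymbol\theta^k$, $k\neq0$. A linear map preserves the two-dimensional cone $\mathcal M$ iff it \emph{permutes} its two boundary rays, so it is not enough to observe that the $\boldsymbol\theta$-orbit of $\R_{>0}h$ consists of distinct rays: you must also exclude that some $\boldsymbol\theta^k$ \emph{swaps} the walls. This is not a hypothetical worry, since $\boldsymbol\theta^{-1}(h)=Uh-eV\xi=U(h-\mu_e\xi)$ lands exactly on the other wall; one only sees that $\boldsymbol\theta^{-1}$ fails by checking that the second wall goes to $(h-c\xi)\R_{>0}$ with $c>\mu_e$ (or, more structurally, by noting that $\det\boldsymbol\theta^k=1>0$ forces a cone-preserving $\boldsymbol\theta^k$ to fix each wall, or by applying your eigenray argument to $\boldsymbol\theta^{2k}$, which would fix $\R_{>0}h$ if $\boldsymbol\theta^k$ swapped the walls). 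The paper sidesteps this by a slightly different bookkeeping: it shows that the translates $\boldsymbol\theta^k(\mathcal M)$ tile the cone bounded by the eigenrays $(h\pm\sqrt e\,\xi)\R_{>0}$, meeting only along walls, so no nonzero power can map $\mathcal M$ onto itself. With one of these one-line patches inserted, your argument is complete and matches the paper's; the reduction $\boldsymbol{\alpha\theta}^k=\boldsymbol\beta\boldsymbol\theta^{k-1}$ (and discarding the irrelevant ``fails to be an involution'' alternative) then finishes exactly as you indicate.
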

\begin{proof}
Let $\mathcal{M}\subset\NS(S^{[2]})\otimes\R$ be the movable cone of $S^{[2]}$.
Notice that every isometry in $O(\NS(S^{[2]}))$ is of the form $\pm\boldsymbol{\theta}^k$ or $\pm\boldsymbol{\alpha\theta}^k$ for suitable $k\in\Z$. By Theorem \ref{teor:bayermacri}, the movable cone has boundary walls $h\R_{>0}$ and $(h-\mu_e\xi)\R_{>0}$, where $\mu_e:=eV/U$ and $(U,V)$ is the minimal positive solution of $\Pell_e(1)$.

First of all, we prove that the cone $C$ with boundary walls $(h+\sqrt{e}\xi)\R_{>0}$ and $(h-\sqrt{e}\xi)\R_{>0}$ is the union of $\{\boldsymbol{\theta}^k(\mathcal{M})\}_{k\in\Z}$, where $\boldsymbol{\theta}^m(\mathcal{M})\cap\boldsymbol{\theta}^n(\mathcal{M})$ is either empty or a boundary wall. To show this claim, notice that the map $\boldsymbol{\theta}$ has eigenvectors $h\pm\sqrt{e}\xi$ and $\boldsymbol{\theta}(C)=C$. Moreover
\begin{align*}
\boldsymbol{\theta}&:(h-\mu_e\xi)\R_{>0}\mapsto h\R_{>0},\\
\boldsymbol{\theta}&:h\R_{>0}\mapsto (h+\mu_e\xi)\R_{>0},\\
\boldsymbol{\theta}&:(h+\mu_e\xi)\R_{>0}\mapsto (h+c\xi)\R_{>0},
\end{align*}
where $\mu_e<c<\sqrt{e}$. Indeed, if $c\le\mu_e$ then $\boldsymbol{\theta}$ has a third eigenvector, a contradiction. The claim follows by iterating the previous argument. Hence $\pm\boldsymbol{\theta}^k$ does not preserve $\mathcal{M}$ for all $k\in\Z\backslash\{0\}$.

The lemma follows by the fact that $\boldsymbol{\alpha}$ is the reflection with respect to $h$. Indeed, with a simple computation one can see that
\begin{align*}
\boldsymbol{\beta}&:h\R_{>0}\mapsto (h-\nu_e\xi)\R_{>0},\\
\boldsymbol{\beta}&:(h-\nu_e\xi)\R_{>0}\mapsto h\R_{>0}.\qedhere
\end{align*}
\end{proof}
The following theorem gives us necessary and sufficient conditions to have strong ambiguity of Hilbert squares of projective K3 surfaces with Picard number one. It is equivalent to \cite[Proposition 3.14]{dm}, see Remark \ref{rem:equivdm}.
\begin{teor}\label{teor:analogodm}
Let $S$ be a K3 surface such that $\NS(S)=\Z h$ with $h$ ample and $h^2=2e$.
The Hilbert square $S^{[2]}$ is strongly ambiguous if and only if:
\begin{enumerate}
\item the Pell-type equation $\Pell_e(1)$ has a (positive) minimal solution $(U,V)$ with $V$ even and $U\not\equiv-1$ mod $2e$;
\item the Pell-type equation $\Pell_{4e}(5)$ is not solvable;
\end{enumerate}
\end{teor}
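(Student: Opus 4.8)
The plan is to reduce the statement to a question about the ample cone of Hilbert squares and then settle that question with Theorem~\ref{teor:bayermacri} and Lemma~\ref{lemma:tecnicmov}. By Theorem~\ref{teor:isoFM} any K3 surface $S'$ with $S'^{[2]}\cong S^{[2]}$ is an FM partner of $S$, hence $S'\cong S_{I_b}$ for some $b$ with $b^2\equiv1$ mod $4e$ (Proposition~\ref{prop:pen-ultimo}); writing also $S\cong S_{I_a}$, the Generalized Torelli Theorem together with Lemma~\ref{lemma:stragrande} and Remark~\ref{rem:conclz=1} shows that $S^{[2]}$ is strongly ambiguous if and only if there is an \emph{effective} glued Hodge isometry $\psi\colon H^2(S_{I_a}^{[2]},\Z)\to H^2(S_{I_b}^{[2]},\Z)$ with $a\not\equiv\pm b$. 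If $e$ is a perfect square or a prime power both sides fail: the left by Lemma~\ref{lemma:esquareamb}, and the right because $\Pell_e(1)$ then either has no positive solution (when $e$ is a square) or, by Lemmas~\ref{lemma:fortunatooo} and~\ref{lemma:faticoso}, has no minimal solution with $V$ even and $U\not\equiv-1$ mod $2e$ (when $e$ is a prime power). So I may assume $e$ is not a square and use the description $O(\NS(S^{[2]}))=\langle\boldsymbol{\theta},\boldsymbol{\alpha}\rangle\times\{\pm\id\}$ of Remark~\ref{rem:galsuhilbert}.

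Next I would set up the lattice bookkeeping. If such a $\psi$ exists then, forgetting effectiveness, Proposition~\ref{prop:finalpiccolo} already yields condition~(1); hence strong ambiguity implies~(1), and from now on I assume~(1) holds, so in particular $U\not\equiv\pm1$ mod $2e$ by Lemma~\ref{lemma:fortunatooo}. For any glued Hodge isometry $\psi$, Remark~\ref{rem:hodgedevasto} forces $\psi|_{\T}=\pm\id$, so by Corollary~\ref{cor:esteniso} the isometry $\varphi:=\psi|_{\NS(S^{[2]})}\in O(\NS(S^{[2]}))$ satisfies $(\pm\bar{\id},\bar{\varphi})(J_a)=J_b$. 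Since $\bar{\boldsymbol{\alpha}}=\id$ and $\bar{\boldsymbol{\theta}}$ is an involution on $A_{\NS(S^{[2]})}$ (Remark~\ref{rem:galsuhilbert}), the image of $O(\NS(S^{[2]}))$ in $O(A_{\NS(S^{[2]})})$ is $\{\id,-\id,\bar{\boldsymbol{\theta}},-\bar{\boldsymbol{\theta}}\}$ and $b\not\equiv\pm a$ forces $\bar{\varphi}=\pm\bar{\boldsymbol{\theta}}$, whence $b\equiv\pm aU$ and $\varphi\ne\pm\id$. Finally, by the Generalized Torelli Theorem $\psi$ is effective precisely when $\varphi$ preserves the ample cone; by Theorem~\ref{teor:bayermacri} all the $S_{I_c}^{[2]}$ share the same ample cone $\mathcal{A}_e\subset\NS(S^{[2]})\otimes\R$, namely the one spanned by $h$ and $h-\nu_e\xi$. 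So the whole problem reduces to: \emph{does $O(\NS(S^{[2]}))$ contain an isometry different from $\pm\id$ that preserves $\mathcal{A}_e$?}

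For sufficiency, assume~(1) and~(2). By~(2) and Theorem~\ref{teor:bayermacri} one has $\nu_e=\mu_e$, so $\mathcal{A}_e$ is the interior of the movable cone of $S^{[2]}$, which by Lemma~\ref{lemma:tecnicmov} is mapped to itself by the involution $\boldsymbol{\beta}=\boldsymbol{\alpha\theta}$. Since $\bar{\boldsymbol{\beta}}=\bar{\boldsymbol{\theta}}$, Corollary~\ref{cor:esteniso} and Remark~\ref{rem:extensionsquares} let me glue $\id_{\T}$ with $\boldsymbol{\beta}$ to an \emph{effective} Hodge isometry $H^2(S^{[2]},\Z)\to H^2(S_{I_b}^{[2]},\Z)$ with $b\equiv aU$ (here $V$ even, part of~(1), is what makes $b^2\equiv1$ mod $4e$, so that $S_{I_b}$ is defined); condition~(1) with Lemma~\ref{lemma:fortunatooo} gives $U\not\equiv\pm1$ mod $2e$, hence $S_{I_b}\not\cong S$. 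By the Generalized Torelli Theorem this produces a K3 surface $S':=S_{I_b}\not\cong S$ with $S'^{[2]}\cong S^{[2]}$, so $S^{[2]}$ is strongly ambiguous.

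For the necessity of~(2) I would argue by contradiction: suppose $S^{[2]}$ is strongly ambiguous but $\Pell_{4e}(5)$ has a solution $(a_5,b_5)$. Then $0<\nu_e<\mu_e$ by Theorem~\ref{teor:bayermacri}, and by the reduction of the second paragraph strong ambiguity furnishes an effective $\psi$, hence an isometry $\varphi=\psi|_{\NS(S^{[2]})}\ne\pm\id$ preserving $\mathcal{A}_e$. Such a $\varphi$ permutes the two boundary rays of $\mathcal{A}_e$; it cannot fix both, since an isometry of a rank-$2$ lattice sending each of two independent classes to a positive multiple of itself is the identity, so it swaps them; hence $\varphi^2=\id$, and therefore $\varphi=\pm\boldsymbol{\alpha}\boldsymbol{\theta}^{k}$ for some $k\in\Z$. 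Inspecting $\varphi(h)$ forces the $+$ sign and $k\ne0$, and $\varphi(h)=U_kh-eV_k\xi$, where $\boldsymbol{\theta}^{k}=\left(\begin{smallmatrix}U_k&V_k\\eV_k&U_k\end{smallmatrix}\right)$ with $U_k^2-eV_k^2=1$, must span the wall $h-\nu_e\xi$, that is $eV_k/U_k=\nu_e=2eb_5/a_5$. Putting $\rho:=a_5/U_k\in\Q$ one gets $2b_5=\rho V_k$ and then $5=a_5^2-e(2b_5)^2=\rho^2(U_k^2-eV_k^2)=\rho^2$, which is absurd. Hence no such $\varphi$ exists, so $S^{[2]}$ is not strongly ambiguous, contradicting the hypothesis; this proves~(2). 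The effectiveness bookkeeping through the Generalized Torelli Theorem and this last case distinction on $O(\NS(S^{[2]}))$ are the parts I expect to require the most care, although they ultimately rest on the elementary facts that $\bar{\boldsymbol{\theta}}$ is an involution and that $\rho^2=5$ has no rational solution.
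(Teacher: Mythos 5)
Your proof is correct, and its skeleton coincides with the paper's: reduce strong ambiguity, via Theorem \ref{teor:isoFM}, Proposition \ref{prop:pen-ultimo}, Lemma \ref{lemma:stragrande} and the Generalized Torelli Theorem, to the existence of an effective glued Hodge isometry with $a\ne\pm b$; extract condition (1) from Proposition \ref{prop:finalpiccolo}; and prove sufficiency by gluing $\id_{\T}$ with $\boldsymbol{\beta}=\boldsymbol{\alpha\theta}$, using Lemma \ref{lemma:tecnicmov} and the coincidence of the nef and movable cones when $\Pell_{4e}(5)$ is unsolvable. You genuinely depart from the paper only in the necessity of (2): the paper invokes Lemma \ref{lemma:tecnicmov} to force the N\'eron--Severi part of any effective isometry to be $\boldsymbol{\beta}$, and then observes that $\boldsymbol{\beta}$ sends the nef wall $h\R_{>0}$ to $(h-\mu_e\xi)\R_{>0}$, which lies outside the nef cone when $\nu_e<\mu_e$; you instead show directly that any isometry $\ne\pm\id$ preserving the ample cone must swap its walls, hence be of the form $\boldsymbol{\alpha\theta}^k$, and the wall condition $eV_k/U_k=2eb_5/a_5$ yields $\rho^2=5$ with $\rho\in\Q$, which is absurd. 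Your variant is self-contained and makes the underlying obstruction transparent (the two nef walls have Beauville--Bogomolov squares $2e$ and $10e$, and $5$ is not a rational square), whereas the paper's version recycles Lemma \ref{lemma:tecnicmov}, which it needs anyway for Theorem \ref{teor:bcns}. Two small points to tighten: in the step ``$\varphi$ cannot fix both boundary rays'' you should add that both wall classes have nonzero (positive) square, so that $q(\varphi(v))=q(v)$ forces the eigenvalue to be $1$ (for isotropic classes the general assertion you quote would fail); and your separate treatment of prime powers is redundant, since the general argument covers them --- only the case of square $e$, where $\boldsymbol{\theta}$ does not exist, needs the special handling via Lemma \ref{lemma:esquareamb}.
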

\begin{proof}
Let $S=S_{I_a}$ for a suitable $a\in(\Z/2e\Z)^*$ such that $a^2\equiv1$ mod $4e$, as in the proof of Proposition \ref{prop:pen-ultimo}.
The Hilbert square ${S_{I_a}}^{[2]}$ is strongly ambiguous if and only if there exists a Hodge isometry $\psi:H^2({S_{I_a}}^{[2]},\Z)\rightarrow H^2({S_{I_b}}^{[2]},\Z)$ such that $S_{I_a}\not\cong S_{I_b}$ (i.e.\ $a\ne\pm b$) where $b\in(\Z/2e\Z)^*$ and $b^2\equiv1$ mod $4e$, as in Proposition \ref{prop:finalpiccolo}, and moreover $\psi$ is effective. By Lemma \ref{lemma:tecnicmov} the only isometry in $O(\NS({S_{I_a}}^{[2]}))$ that preserves the movable cone is $\boldsymbol{\beta}:=\boldsymbol{\alpha\theta}$. Hence ${S_{I_a}}^{[2]}$ is strongly ambiguous if and only if the Hodge isometry $\psi$ obtained by glueing ${\id}$ with ${\boldsymbol{\beta}}$ is effective.

We prove now that this isometry is effective if and only if $\Pell_{4e}(5)$ is not solvable. If $\Pell_{4e}(5)$ is not solvable then by Theorem \ref{teor:bayermacri} the movable cone is equal to the ample cone, so $\psi$ is effective. Conversely, suppose that $\Pell_{4e}(5)$ is solvable; then by Theorem \ref{teor:bayermacri} the boundary walls of the ample cone are $h\R_{>0}$ and $(h-\nu_e\xi)\R_{>0}$, where $\nu_e:=2e\frac{b_5}{a_5}$ with $(a_5,b_5)$ the positive minimal solution of $\Pell_{4e}(5)$ and $\nu_e<\mu_e$, in particular the ample cone is contained in the movable cone. Notice that
\begin{align*}
\boldsymbol{\beta}&:h\R_{>0}\mapsto (h-\mu_e\xi)\R_{>0},\\
\boldsymbol{\beta}&:(h-\nu_e\xi)\R_{>0}\mapsto (h-c\xi)\R_{>0},
\end{align*}
where $0<c<\mu_e$. Suppose that $0<c<\nu_e$, then $\psi$ is the pullback of an isomorphism between the Hilbert squares ${S_a}^{[2]}$ and ${S_b}^{[2]}$ by the Generalized Torelli Theorem. But not every ample divisor maps to ample divisor, a contradiction. Hence $\nu_e\le c<\mu_e$ and $\psi$ can not be effective.
\end{proof}
\begin{rem}\label{rem:equivdm}
By Lemma \ref{lemma:fortunatooo}, it is trivial that Theorem \ref{teor:analogodm} is equivalent to \cite[Proposition 3.14]{dm}, but here it is proved from a different point of view.
\end{rem}
\begin{cor}
In Example \ref{ex:e=6} there is strong ambiguity, in Examples \ref{ex:e=10}, \ref{ex:e=15} there is no strong ambiguity.
\end{cor}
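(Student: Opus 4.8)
The plan is to deduce everything from Theorem~\ref{teor:analogodm}, whose two numerical conditions I will check against the minimal Pell solutions already recorded in Examples~\ref{ex:e=6},~\ref{ex:e=10} and~\ref{ex:e=15}. In particular, for $e=6$ the examples only produced a (non-effective) Hodge isometry between the second cohomology groups of the Hilbert squares of the two FM partners, so the genuinely new input is to settle the effectiveness via condition (2) of Theorem~\ref{teor:analogodm}.

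First, for $e=6$: since the minimal positive solution of $\Pell_6(1)$ is $(U,V)=(5,2)$, I would observe that $V$ is even and $U=5\not\equiv -1\pmod{12}$, so condition (1) of Theorem~\ref{teor:analogodm} holds. It then remains to verify condition (2), namely that $\Pell_{24}(5)$ has no solution; I would do this by reducing $x^2-24y^2=5$ modulo $3$, which forces $x^2\equiv 2\pmod 3$, impossible since the squares mod $3$ are $0$ and $1$. With both conditions satisfied, Theorem~\ref{teor:analogodm} yields that $S^{[2]}$ is strongly ambiguous.

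Next, for $e=10$ and $e=15$ I would simply note that condition (1) fails. For $e=10$ the minimal positive solution of $\Pell_{10}(1)$ is $(U,V)=(19,6)$, which has $V$ even but $U=19\equiv -1\pmod{20}$ (equivalently, by Lemma~\ref{lemma:fortunatooo}, $\Pell_{10}(-1)$ is solvable, as $3^2-10=-1$); for $e=15$ the minimal positive solution of $\Pell_{15}(1)$ is $(U,V)=(4,1)$, which has $V$ odd. In either case condition (1) of Theorem~\ref{teor:analogodm} is not met, hence $S^{[2]}$ is not strongly ambiguous.

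There is no real obstacle here: the argument is a direct application of Theorem~\ref{teor:analogodm}, and the only step that is not purely formal is the unsolvability of $\Pell_{24}(5)$, which is dispatched by the one-line congruence above.
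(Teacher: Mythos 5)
Your proof is correct and follows essentially the same route as the paper: a direct application of Theorem~\ref{teor:analogodm}, checking condition (1) via the recorded minimal solutions of $\Pell_e(1)$ (failure for $e=10$ by $U\equiv-1$ mod $2e$, equivalently solvability of $\Pell_{10}(-1)$, and for $e=15$ by $V$ odd). Your only addition is the explicit mod~$3$ verification that $\Pell_{24}(5)$ is unsolvable for $e=6$, a detail the paper leaves implicit, and it is correct.
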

\begin{proof}
The claim follows easily by Theorem~\ref{teor:analogodm}. In the Example~\ref{ex:e=10} the equation $\Pell_e(-1)$ is solvable, take for instance $(3,1)$. In the Example~\ref{ex:e=15} the equation $\Pell_e(1)$ has minimal solution $(U,V)=(4,1)$ and $V$ is odd.
\end{proof}
\section{Automorphisms of Hilbert squares}\label{sec:Automorphisms}
We now apply the previous algorithm to compute $\Aut(S^{[2]})$. The following result is \cite[Theorem 1.1]{bcns}, but proved from another point of view.
\begin{teor}\label{teor:bcns}
Let $S$ be a K3 surface with Picard number one such that $\NS(S)=\Z h$ with $h$ ample and $h^2=2e$.
The automorphism group $\Aut(S^{[2]})$ is either trivial or generated by a non trivial involution. In particular $\Aut(S^{[2]})$ is not trivial if and only if:
\begin{enumerate}
\item e is not a square;
\item the Pell-type equation $\Pell_e(-1)$ is solvable;
\item the Pell-type equation $\Pell_{4e}(5)$ is not solvable.
\end{enumerate}
\end{teor}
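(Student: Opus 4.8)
The plan is to mirror the structure of the proof of Theorem~\ref{teor:analogodm}, replacing ``Hodge isometries between the second cohomology groups of two distinct Hilbert squares'' with ``Hodge isometries from $H^2(S^{[2]},\Z)$ to itself.'' First I would use the Generalized Torelli Theorem (Theorem~\ref{teor:torelligen}) together with Remark~\ref{rem:extensionsquares} and Lemma~\ref{lemma:isocap4end} to reduce the computation of $\Aut(S^{[2]})$ to the computation of the group of \emph{effective} Hodge isometries of $H^2(S^{[2]},\Z)$: every automorphism of $S^{[2]}$ induces such an isometry, and conversely every effective Hodge isometry is the pullback of a (unique) automorphism. Next, by Remark~\ref{rem:hodgedevasto} a Hodge isometry of $H^2(S^{[2]},\Z)$ restricts to $\pm\id$ on $\T$ and to some element $\boldsymbol{\rho}\in O(\NS(S^{[2]}))$; by Corollary~\ref{cor:esteniso} (applied with $I=J=J_a$, where $S=S_{I_a}$ as in the proof of Proposition~\ref{prop:pen-ultimo}) such a pair glues to a genuine isometry of the full lattice precisely when $(\overline{\pm\id},\overline{\boldsymbol{\rho}})(J_a)=J_a$, i.e.\ when $\overline{\boldsymbol{\rho}}$ fixes $J_a$. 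So the relevant subgroup of $O(\NS(S^{[2]}))$ consists of those $\boldsymbol{\rho}$ with $\overline{\boldsymbol{\rho}}(J_a)=J_a$.

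The second step is to identify, inside that subgroup, the ones that are in addition effective, i.e.\ that preserve the ample cone $C$ of $S^{[2]}$. Here I would invoke Lemma~\ref{lemma:tecnicmov}: the only non-trivial isometry of $\NS(S^{[2]})$ preserving the \emph{movable} cone is the involution $\boldsymbol{\beta}=\boldsymbol{\alpha\theta}$, which exists only when $e$ is not a square. Since the ample cone is contained in the movable cone, any non-trivial effective isometry must be $\boldsymbol{\beta}$; and $\pm\id$ are always present but $-\id$ is never effective (it does not preserve $C$), so $\Aut(S^{[2]})$ is either trivial or the order-two group generated by the involution coming from $\boldsymbol{\beta}$. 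This already gives the first assertion of the theorem.

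For the ``if and only if'' part I would then check, case by case, the three conditions. Condition~(1), that $e$ is not a square, is exactly the existence of $\boldsymbol{\beta}$ by Lemma~\ref{lemma:tecnicmov}. Condition~(3), that $\Pell_{4e}(5)$ is not solvable, is what guarantees (via Theorem~\ref{teor:bayermacri}) that the ample cone equals the movable cone, so that $\boldsymbol{\beta}$, which preserves the movable cone, is actually effective; when $\Pell_{4e}(5)$ is solvable the ample cone is strictly smaller, $\nu_e<\mu_e$, and the same computation as at the end of the proof of Theorem~\ref{teor:analogodm} shows $\boldsymbol{\beta}$ sends $h\R_{>0}$ to the wall $(h-\mu_e\xi)\R_{>0}$ outside $C$, so it is not effective. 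Condition~(2), that $\Pell_e(-1)$ is solvable, is the glueing constraint: writing $\boldsymbol{\theta}=\left(\begin{smallmatrix}U&V\\eV&U\end{smallmatrix}\right)$ with $(U,V)$ the minimal solution of $\Pell_e(1)$, one computes $\overline{\boldsymbol{\beta}}(J_a)=J_{-aU}$ (using that $\boldsymbol{\alpha}$ acts as $\pm1$ on the relevant generator), and this equals $J_a$ iff $-U\equiv\pm1\ \mathrm{mod}\ 2e$; since $U\equiv 1$ would force $V$ odd by Lemma~\ref{lemma:fortunatooo}, we need $U\equiv-1\ \mathrm{mod}\ 2e$ with $V$ even, which by the same Lemma~\ref{lemma:fortunatooo} is equivalent to $\Pell_e(-1)$ being solvable.

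The main obstacle, and the place requiring the most care, is the glueing computation $\overline{\boldsymbol{\beta}}(J_a)=J_a$: one must pin down how $\overline{\boldsymbol{\theta}}$ and $\overline{\boldsymbol{\alpha}}$ act on the cyclic discriminant group $A_{\NS(S^{[2]})}=A_{\Z h}\times A_{\langle-2\rangle}$ and track the effect on the generator $\left(\tfrac{1}{2e}t,\tfrac{a}{2e}h,0\right)$ of $J_a$, checking in particular that the $\xi$-component stays zero (so that one does not accidentally land in a $z=1$ isotropic subgroup as in Example~\ref{ex:e=15}). Once this modular bookkeeping is done and combined with Lemma~\ref{lemma:fortunatooo}, the three conditions fall out exactly as in Theorem~\ref{teor:analogodm}, with condition~(2) replacing ``$U\not\equiv-1$'' by ``$U\equiv-1$'' because here we need $\boldsymbol{\beta}$ to \emph{fix} $J_a$ rather than to move it to a different $J_b$.
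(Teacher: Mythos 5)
Your proposal is correct and follows essentially the same route as the paper's proof: reduce $\Aut(S^{[2]})$ to effective Hodge isometries via the Generalized Torelli Theorem, glue $\pm\id$ on $\T$ with an element of $O(\NS(S^{[2]}))$ fixing $J_a$ (Corollary~\ref{cor:esteniso}), single out $\boldsymbol{\beta}=\boldsymbol{\alpha\theta}$ via Lemma~\ref{lemma:tecnicmov}, translate the congruence $U\equiv-1$ mod $2e$ with $V$ even into solvability of $\Pell_e(-1)$ via Lemma~\ref{lemma:fortunatooo}, and settle effectiveness through the $\Pell_{4e}(5)$ cone argument of Theorem~\ref{teor:analogodm}. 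The only imprecision is notational: it is the pair $(-\bar{\id},\bar{\boldsymbol{\beta}})$ that sends $J_a$ to $J_{-aU}$ (while $(\bar{\id},\bar{\boldsymbol{\beta}})$ gives $J_{aU}$), but since you consider both signs and discard $U\equiv1$ by Lemma~\ref{lemma:fortunatooo}, you land on exactly the paper's conclusion that the unique candidate is the glueing of $-\id$ with $\boldsymbol{\beta}$.
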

\begin{proof}
Let $S=S_{I_a}$ for a suitable $a\in(\Z/2e\Z)^*$ such that $a^2\equiv1$ mod $4e$, as in the proof of Proposition \ref{prop:pen-ultimo}.
Then the group $\Aut(S^{[2]})$ is in bijection with the group
\[
\G:=\{\psi:H^2({S_{I_a}}^{[2]},\Z)\rightarrow H^2({S_{I_a}}^{[2]},\Z)\,|\,\psi\text{ is an effective Hodge isometry}\}.
\]

A necessary condition to have a non trivial $\psi\in\G$ is that $e$ is not a square (see Remark \ref{rem:galsuhilbert}). Under this hypothesis, by Lemma \ref{lemma:tecnicmov} the only non trivial isometry in $O(\NS(S^{[2]}))$ that maps the movable cone of $S^{[2]}$ in itself is the involution $\boldsymbol{\beta}:=\boldsymbol{\alpha\theta}$. Let $(U,V)$ be the minimal positive solution of $\Pell_e(1)$. We need $V$ even, hence by Lemma \ref{lemma:fortunatooo} $U\not\equiv1$ mod $2e$. Then we have to impose $U\equiv-1$ mod $2e$, otherwise $(\pm\bar{\id},\bar{\boldsymbol{\beta}})$ does not preserve $J_a$. Again by Lemma \ref{lemma:fortunatooo}, $U\equiv-1$ mod $2e$ and $V$ even is equivalent to $\Pell_e(-1)$ solvable. To preserve $J_a$, the only possibility is $\psi$ obtained by glueing $-\id$ with $\boldsymbol{\beta}$, indeed
\[
(-\bar{\id},\bar{\boldsymbol{\beta}}):J_a:=\Braket{
\Big(\frac{1}{2e}t,\frac{a}{2e}h,0\Big)
}\mapsto \Braket{
\Big(\frac{-1}{2e}t,\frac{-a}{2e}h,0\Big)
}=J_a.
\]
In this case, as in the proof of Theorem \ref{teor:analogodm}, $\psi$ is effective if and only if $\Pell_{4e}(5)$ is not solvable.
\end{proof}


\begin{thebibliography}{aaaaaa}
\bibitem[BPV]{bpv}
Barth W., Peters C., Van de Ven A., Compact Complex Surfaces, \emph{Springer-Verlag, Berlin Heidelberg (1984)}.
\bibitem[BM]{bm}
Bayer A., Macrì E., MMP for moduli of shaves on K3s via wall-crossing: nef and movable cones, Lagrangian fibrations, \emph{Invent. Math., 198 (2014)}, 505--590.
\bibitem[Be1]{be1}
Beauville A., Variétés kähleriennes dont la première classe de Chern est nulle, \emph{J. Differential Geometry, 18 (1983)}, 755--782.
\bibitem[BCNS]{bcns}
Boissière S., Cattaneo A., Nieper-Wisskirchen M., Sarti A., The automorphism group of the Hilbert scheme of two points on a generic projective K3 surface, \emph{K3 surfaces and Their Moduli, Springer International Publishing Switzerland, Progress in Mathematics 315 (2016)}, 1--15.
\bibitem[C]{c}
Cattaneo A., Automorphisms of Hilbert schemes of points on a generic projective K3 surface, \texttt{arXiv:1801.05682v2} (2018).
\bibitem[DM]{dm}
Debarre O., Macrì E., Unexpected isomorphisms between hyperkähler fourfolds, \texttt{arXiv:1704.01439v1} (2017).
\bibitem[GLP]{gal}
Galluzzi F., Lombardo G., Peters C., Automorphs of indefinite binary quadratic forms and K3-surfaces with Picard number 2, \emph{Rend. Sem. Mat. Univ. Politec. Torino, Vol. 63, 1 (2010)}, 57--77.
\bibitem[GHJ]{ghj}
Gross M., Huybrechts D., Joyce D., Calabi--Yau Manifolds and Related Geometries, \emph{Lectures at a Summer School in Nordfjordeid, Norway, June 2001, Universitext, Springer-Verlag, Berlin Heidelberg (2003)}.
\bibitem[Hu]{hu2}
Huybrechts D., Lectures on K3 Surfaces, \emph{Cambridge Studies in Advanced Mathematics 158 (2016)}.
\bibitem[Jo]{jon}
Jones B. W., The arithmetic theory of quadratic forms, \emph{Carcus Monograph Series, no. 10. The Mathematical Association of America, Buffalo, N.Y. (1950)}.
\bibitem[Ma]{ma}
Markman E., A survey of Torelli and monodromy results for holomorphic-symplectic varieties, \emph{Springer Proceedings in Mathematics, Volume 8 (2011)}, 257--322.
\bibitem[MMY]{mmy}
Meachan C., Mongardi G., Yoshioka K., Derived equivalent Hilbert schemes of points on K3 surfaces which are not birational, \texttt{arXiv:1802.00281v1} (2018).
\bibitem[Na]{na}
Nakajima H., Lecture on Hilbert Schemes of Points on Surfaces, \emph{University Lecture Series 18, American Mathematical Society (1999)}.
\bibitem[Ogr]{og}
O'Grady K., Compact hyperkähler manifolds - an introduction, \emph{Notes 2013} \url{http://irma.math.unistra.fr/~pacienza/notes-ogrady.pdf}.
\bibitem[Ogu]{ogu}
Oguiso K., K3 surfaces via almost-primes, \emph{Mathematical Research Letters, 9 (2001)}, 47–63.
\bibitem[Ok]{okawa}
Okawa S., An example of birationally inequivalent projective symplectic varieties which are D-equivalent and L-equivalent, \texttt{arXiv:1801.09385v2} (2018).
\bibitem[P]{p}
Ploog D., Equivariant autoequivalences for finite group actions, \emph{Advances in Mathematics 216 (2007)}, 62--74.
\bibitem[Se]{se}
Serre J. P., A Course in Arithmetic, \emph{Graduated texts in mathematics, 7, Springer-Verlag, New York (1973)}.
\end{thebibliography}
\end{document}